% commutative diagrams
% This package allows the use of script letters. Type \mathscr to invoke math script.
%\usepackage[german]{babel}

\documentclass[preprint,12pt]{elsarticle}%

\usepackage{amssymb}
\usepackage{latexsym}
\usepackage{amsfonts}
\usepackage{amsmath}
\usepackage{amscd}
\usepackage{mathrsfs}
\usepackage{amsbsy}
\usepackage{amsthm}
\usepackage{graphicx}
\usepackage{mathtools}
\usepackage{hyperref}
\usepackage{color}
\setcounter{MaxMatrixCols}{30}
%TCIDATA{OutputFilter=latex2.dll}
%TCIDATA{Version=5.50.0.2960}
%TCIDATA{CSTFile=amsartci.cst}
%TCIDATA{Created=Wednesday, April 12, 2006 01:12:51}
%TCIDATA{LastRevised=Thursday, September 13, 2012 13:41:18}
%TCIDATA{<META NAME="GraphicsSave" CONTENT="32">}
%TCIDATA{<META NAME="SaveForMode" CONTENT="1">}
%TCIDATA{BibliographyScheme=Manual}
%TCIDATA{<META NAME="DocumentShell" CONTENT="Articles\SW\AMS Journal Article">}
%TCIDATA{Language=American English}
%TCIDATA{ComputeDefs=
%$P_{1}=0$
%$P_{2}=0$
%$Q_{1}=0.2$
%$Q_{2}=0.9$
%$R_{1}=0.9$
%$R_{2}=0.1$
%$S_{1}=0.1$
%$S_{2}=0.9$
%$x=1$
%$y=1$
%$B=\left(  1-\left\vert (R_{1}-S_{1})y+(Q_{1}-P_{1})(1-y)\right\vert
%-\left\vert (R_{2}-Q_{2})x+(S_{2}-P_{2})(1-x)\right\vert +\left\vert \det%
%\begin{pmatrix}
%(R_{1}-S_{1})y+(Q_{1}-P_{1})(1-y) & (R_{2}-S_{2})y+(Q_{2}-P_{2})(1-y)\\
%(R_{1}-Q_{1})x+(S_{1}-P_{1})(1-x) & (R_{2}-Q_{2})x+(S_{2}-P_{2})(1-x)
%\end{pmatrix}
%\right\vert \right)  $
%$x_{k}$
%}
%BeginMSIPreambleData
\providecommand{\U}[1]{\protect\rule{.1in}{.1in}}
%EndMSIPreambleData

\newcommand{\N}{\mathbb{N}}

\newcommand{\R}{\mathbb{R}}

\newcommand{\X}{\mathbb{X}}

\newcommand{\cF}{\mathcal{F}}

\newcommand{\cN}{\mathcal{N}}

\newcommand{\oY}{\overline{Y}}
\newcommand{\uY}{\underline{Y}}

\newcommand{\be}{\begin{equation}}
\newcommand{\ee}{\end{equation}}

\newcommand{\st}{\,:\,}

\providecommand{\U}[1]{\protect\rule{.1in}{.1in}}
\providecommand{\U}[1]{\protect\rule{.1in}{.1in}}
\newcommand{\nl}{\vskip 5pt\noindent}

\newtheorem{theorem}{Theorem}
\theoremstyle{plain}

\newtheorem{definition}{Definition}
\newtheorem{example}{Example}

\newtheorem{lemma}{Lemma}

\newtheorem{proposition}{Proposition}
\newtheorem{remark}{Remark}

\numberwithin{equation}{section}

\begin{document}

\begin{frontmatter}

\title{Bilinear Fractal Interpolation and Box Dimension}
\author{Michael F. Barnsley\fnref{anu}}
\fntext[anu]{Mathematical Sciences Institute, The Australian National University, Canberra, ACT, Australia, michael.barnsley@anu.edu.au, mbarnsley@aol.com}
%\urladdr{http://www.superfractals.com}
\author{Peter R. Massopust\fnref{tum,hmgu}}
\fntext[tum]{Centre of Mathematics, Research Unit M6, Technische Universit\"at M\"unchen, Boltzmannstr. 3, 85747 Garching, Germany, massopust@ma.tum.de}
\fntext[hmgu]{Helmholtz Zentrum M\"unchen,Ingolst\"adter Landstra{\ss}e 1, 8764 Neuherberg, Germany}

\begin{abstract}
In the context of general iterated function systems (IFSs), we introduce
bilinear fractal interpolants as the fixed points of certain
Read-Bajraktarevi\'{c} operators. By exhibiting a generalized ``taxi-cab"
metric, we show that the graph of a bilinear fractal interpolant is the
attractor of an underlying contractive bilinear IFS. We present an explicit
formula for the box-counting dimension of the graph of a bilinear fractal
interpolant in the case of equally spaced data points. 
\end{abstract}

\begin{keyword}
Iterated function system (IFS) \sep attractor \sep fractal interpolation \sep Read-Bajraktarevi\'{c} operator \sep bilinear
mapping \sep bilinear IFS \sep box counting dimension
\MSC[2010] 27A80, 37L30.
\end{keyword}

\end{frontmatter}

\section{Introduction}

Bilinear filtering or bilinear interpolation is used in computer graphics to
compute intermediate values for a two-dimensional regular grid. One of the
main objectives is the smoothening of textures when they are enlarged or
reduced in size. In mathematical terms, the interpolation technique is based
on finding a function $f(x,y)$ of the form $f(x,y) = a + bx + cy + d xy$,
where $a,b,c,d\in\mathbb{R}$, that passes through prescribed data points.

As textures reveal, in general, a non-smooth or even fractal characteristic,
a description in terms of fractal geometric methods seems reasonable. To this
end, the classical bilinear approximation method is replaced by a bilinear
fractal interpolation procedure. The latter allows for additional parameters,
such as the box dimension, that are related to the regularity and appearance
of an underlying texture pattern.

We introduce a class of fractal interpolants that are based on bilinear
functions of the above form. We do this by considering a more general class of
iterated function systems (IFSs) and by using a more general definition of
attractor of an IFS. These more comprehensive concepts are primarily based on
topological considerations. In this context, we extend and correct some known
results from \cite{massopust} concerning fractal interpolation functions that
are fixed points of so-called Read-Bajraktarevi\'{c} operators. Theorem
\ref{ifsthm} relates the fixed point in Theorem \ref{operatorthm} to the
attractor of an IFS and generalizes known results to the case where the IFS is
not contractive.

As a special example of the preceding theory we introduce bilinear fractal interpolants
and show that their graphs are the attractors of an underlying contractive bilinear IFS. Such bilinear IFSs have been investigated in \cite{BV} in connection with
fractal homeomorphisms and address structures underlying an IFS. Finally, we
present an explicit formula for the box dimension of the graph of a bilinear fractal
interpolant in the case where the data points are equally spaced.

\section{\label{general}General iterated function systems}

The terminology here for iterated function system, attractor, and contractive
iterated function system is from \cite{BVW}. Throughout this paper,
$(\mathbb{X},d)$ denotes a complete metric space with metric $d=d_{\mathbb{X}%
}$.

\begin{definition}
Let $N\in\mathbb{N} := \{1, 2, 3, \ldots\}$. If $f_{n}:\mathbb{X}\rightarrow\mathbb{X}$,
$n=1,2,\dots,N,$ are continuous mappings, then $\mathcal{F}=\left(
\mathbb{X};f_{1},f_{2},...,f_{N}\right)  $ is called an \textbf{iterated
function system} (IFS).
\end{definition}

By slight abuse of terminology we use the same symbol $\mathcal{F}$ for the
IFS, the set of functions in the IFS, and for the following mappings. We
define $\mathcal{F}:2^{\mathbb{X}}\rightarrow 2^{\mathbb{X}}$ by
\[
\mathcal{F}(B) := \bigcup_{f\in\mathcal{F}}f(B)
\]
for all $B\in2^{\mathbb{X}},$ the set of subsets of $\mathbb{X}$. Let
$\mathbb{H=H(X)}$ be the set of nonempty compact subsets of $\mathbb{X}$.
Since $\mathcal{F}\left(  \mathbb{H}\right)  \subset\mathbb{H}$, we can also
treat $\mathcal{F}$ as a mapping $\mathcal{F}:\mathbb{H} \rightarrow \mathbb{H}$. When
$U\subset\mathbb{X}$ is nonempty, we may write $\mathbb{H}(U)=\mathbb{H(X)}%
\cap2^{U}$. We denote by $\left\vert \mathcal{F}\right\vert $ the number of
distinct mappings in $\mathcal{F}$.

Let $d_{\mathbb{H}}$ denote the Hausdorff metric on $\mathbb{H}$, defined in
terms of $d_{\mathbb{X}}$. A convenient definition (see for example
\cite[p.66]{edgar}) is%
\[
d_{\mathbb{H}}(B,C) := \inf\{r>0:B\subset C+r,C\subset B+r\},
\]
for all $B,C\in\mathbb{H}$. For $S\subset\mathbb{X}$ and $r>0,$ $S+r$ denotes
the set $\{y\in\mathbb{X}:\exists x\in S$ $\;\,\text{so that}\;d_{\mathbb{X}}(x,y)<r\}$.

We say that a metric space $\mathbb{X}$ is \textit{locally compact} to mean
that if $C\subset\mathbb{X}$ is compact and $r$ is a positive real number then
$\overline{C+r}$ is compact. Here, $\overline{S}$ denotes the closure of a set $S$.
%
%The notation $\overline{C+r}$ means the closure
%of the union of balls of radius $r$, one centered on each point of $C$.  
(For an equivalent definition of local compactness, see for instance \cite[3.3]{Engel}.)

The following information is foundational.

\begin{theorem}
\label{ctythm}
\begin{itemize}
\item[(i)] The metric space $(\mathbb{H},d_{\mathbb{H}})$ is complete.

\item[(ii)] If $(\mathbb{X},d_{\mathbb{X}})$ is compact then $(\mathbb{H}%
,d_{\mathbb{H}})$ is compact.

\item[(iii)] If $(\mathbb{X},d_{\mathbb{X}})$ is locally compact then $(\mathbb{H}%
,d_{\mathbb{H}})$ is locally compact.

\item[(iv)] If $\mathbb{X}$ is locally compact, or if each $f\in\mathcal{F}$ is
uniformly continuous, then $\mathcal{F}:\mathbb{H\rightarrow H}$ is continuous.

\item[(v)] If $f:\mathbb{X\rightarrow}\mathbb{X}$ is a contraction mapping for each
$f\in\mathcal{F}$, then $\mathcal{F}:\mathbb{H\rightarrow H}$ is a contraction mapping.
\end{itemize}
\end{theorem}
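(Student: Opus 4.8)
The plan is to prove the five items in turn, extracting first the one combinatorial estimate that powers most of them. Writing $\mathcal{F}(B)=\bigcup_{f\in\mathcal{F}} f(B)$ as a finite union, the starting point is the inequality $d_{\mathbb{H}}(\bigcup_i A_i,\bigcup_i B_i)\le\max_i d_{\mathbb{H}}(A_i,B_i)$ for $A_i,B_i\in\mathbb{H}$, which is read off directly from the definition of the Hausdorff metric: if each $A_i\subset B_i+r$ and each $B_i\subset A_i+r$, then $\bigcup_i A_i\subset(\bigcup_i B_i)+r$ and symmetrically. Granting this, I would dispatch (v) at once: if each $f$ is a contraction with factor $s_f<1$, then $d_{\mathbb{H}}(f(B),f(C))\le s_f\,d_{\mathbb{H}}(B,C)$, and the union estimate gives $d_{\mathbb{H}}(\mathcal{F}(B),\mathcal{F}(C))\le(\max_f s_f)\,d_{\mathbb{H}}(B,C)$ with $\max_f s_f<1$.

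For (i), completeness, I would take a $d_{\mathbb{H}}$-Cauchy sequence $(A_n)$ in $\mathbb{H}$ and define the candidate limit $A=\bigcap_{n\ge1}\overline{\bigcup_{m\ge n}A_m}$, equivalently the set of all limits of convergent selections $x_{n_k}\in A_{n_k}$. The work is to check that $A$ is nonempty (produce a Cauchy, hence convergent, selection $x_n\in A_n$ using completeness of $\mathbb{X}$), that it is closed and totally bounded, hence compact, and that $d_{\mathbb{H}}(A_n,A)\to0$; each step uses only the Cauchy condition and completeness of $(\mathbb{X},d_{\mathbb{X}})$.

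For (ii) I would combine (i) with total boundedness: given $\eps>0$, choose a finite $\eps$-net $\{p_1,\dots,p_k\}$ of the compact space $\mathbb{X}$; then the finite collection of all nonempty subsets of $\{p_1,\dots,p_k\}$ is an $\eps$-net for $\mathbb{H}$, so $\mathbb{H}$ is totally bounded and complete, hence compact. For (iii) the key lemma is that the union $K=\overline{\bigcup_{B\in\mathcal{C}}B}$ over a $d_{\mathbb{H}}$-compact family $\mathcal{C}\subset\mathbb{H}$ is itself compact in $\mathbb{X}$: any sequence $x_n\in B_n$ with $B_n\in\mathcal{C}$ admits $B_{n_k}\to B$ in $\mathbb{H}$, and convergence in $d_{\mathbb{H}}$ lets one extract $x_{n_k}\to$ a point of $B$. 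Given this, every member of $\mathcal{C}+r$ lies inside $\overline{K+r}$, which is compact by the local compactness of $\mathbb{X}$; hence $\overline{\mathcal{C}+r}$ is a closed subset of the compact space $\mathbb{H}(\overline{K+r})$ (compact by (ii)) and is therefore compact.

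Finally, for (iv) the union estimate reduces continuity of $\mathcal{F}$ to continuity of each single-map operator $B\mapsto f(B)$. If $f$ is uniformly continuous, its modulus of continuity transfers verbatim to $d_{\mathbb{H}}$. If instead $\mathbb{X}$ is locally compact, I would fix $B_0\in\mathbb{H}$, note that all $B$ with $d_{\mathbb{H}}(B,B_0)\le1$ lie in the compact set $\overline{B_0+1}$, restrict $f$ there to gain uniform continuity on a neighborhood, and conclude local continuity. I expect the genuine obstacles to be (i) and (iii): the delicate points are verifying nonemptiness and compactness of the limit set in (i) and proving the compact-union lemma in (iii), both of which require careful subsequence extractions rather than the comparatively routine arguments that settle (ii), (iv), and (v).
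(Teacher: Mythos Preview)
Your proposal is correct and, for items (ii), (iv), and (v), follows essentially the same arguments as the paper; for (i) the paper simply cites Edgar's textbook, whereas you sketch the standard construction of the limit set. The one organizational difference is in (iii): the paper mimics its own proof of (ii) directly, taking a single point $C\in\mathbb{H}$, observing that $\overline{C+r}\subset\mathbb{X}$ is compact by the local-compactness hypothesis, and then using finite subsets of $\overline{C+r}$ as an $\varepsilon$-net to show the closed $r$-ball in $\mathbb{H}$ about $C$ is totally bounded. You instead isolate the compact-union lemma (the union over a $d_{\mathbb{H}}$-compact family $\mathcal{C}$ is compact in $\mathbb{X}$) and then apply (ii) to $\mathbb{H}(\overline{K+r})$ as a black box. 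Your route is a bit more modular and treats general compact $\mathcal{C}\subset\mathbb{H}$ at once, while the paper handles only singletons (which suffices, since any compact $\mathcal{C}$ sits in a single closed ball); both are standard and of comparable difficulty.
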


\begin{proof}
(i) This is well-known. A short proof can be found in \cite[p.67, Theorem
2.4.4]{edgar}.

(ii) This is well-known; see for example \cite{henrikson}. Here is a short
proof. Let $\varepsilon>0$ be given$.$ Since $\mathbb{X}$ is compact we can
find a finite set of points $\mathbb{X}_{\varepsilon}\subset\mathbb{X}$ such
that $\mathbb{X=\cup}_{x\in\mathbb{X}_{\varepsilon}}\mathcal{B}\left(
x,\varepsilon\right)  $ where $\mathcal{B}\left(  x,\varepsilon\right)
\subset\mathbb{X}$ denote the open ball with center at $x$ and radius
$\varepsilon$. Let $\mathbb{H}_{\varepsilon} := 2^{\mathbb{X}_{\varepsilon}}$, a
finite set of points in $\mathbb{H}$. It is readily verified that
$\mathbb{H=\cup}_{C\in\mathbb{H}_{\varepsilon}}\mathcal{B}\left(
C,\varepsilon\right)  $ where now $\mathcal{B}\left(  C,\varepsilon\right)
\subset\mathbb{H}$ denotes the open ball with center at $C\in\mathbb{H}$ and
radius $\mathbb{\varepsilon}$, measured using the Hausdorff metric. It follows
that $\mathbb{H}$ is totally bounded. It follows that $\mathbb{H}$ is compact.

(iii) Let $C\in\mathbb{H}$. Consider the set $\overline{C+r}$. It belongs to
$\mathbb{H}$ since $\mathbb{X}$ is locally compact. Let $\varepsilon>0$ be
given$.$ Since $\overline{C+r}$ is a compact subset of $\mathbb{X}$ we can
find a finite set of points $C_{\varepsilon}\subset\overline{C+r}$ such that
$\overline{C+r}\mathbb{\subset\cup}_{c\in C_{\varepsilon}}\mathcal{B}\left(
c,\varepsilon\right)  $. Let $\mathbb{C}_{\varepsilon} := 2^{C_{\varepsilon}}$, a
finite set of points in $\mathbb{H}$. It is readily verified that
$\overline{C+r}\mathbb{\subset\cup}_{c\in\mathbb{C}_{c}}\mathcal{B}\left(
C,\varepsilon\right)  $ where now $\mathcal{B}\left(  c,\varepsilon\right)
\subset\mathbb{H}$ denotes the open ball with center at $c\in\mathbb{H}$ and
radius $\mathbb{\varepsilon}$, measured using the Haudorff metric. It follows
that $\overline{C+r}$ is totally bounded. It follows that $\overline{C+r}$ is compact.

(iv) Let $B\in\mathbb{H}$. We show that $\mathcal{F}:\mathbb{H\rightarrow H}$
is continuous at $B$. We restrict attention to the action of $\mathcal{F}$
on$\ B+1$. If $\mathbb{X}$ is locally compact, it follows that $\overline
{B+1}$ is compact. It follows that each $f\in\mathcal{F}$ is uniformly
continuous on $\overline{B+1}.$ It follows that if $\mathbb{X}$ is locally
compact, or if each $f\in\mathcal{F}$ is uniformly continuous, we can find
$\delta_{\varepsilon}>0$ such that $d_{\mathbb{X}}(f(x),f(y))<\varepsilon$
whenever $d_{\mathbb{X}}(x,y)<\delta_{\varepsilon},$ for all $x,y\in$
$\overline{B+1}$, for all $f\in\mathcal{F}.$ Let $C\in\mathbb{H}$ with
$d_{\mathbb{H}}(B,C)<\delta_{\varepsilon}$ and let $f\in\mathcal{F}$. We can
suppose that $\delta_{\varepsilon}<1$.

Let $b^{\prime}\in f(B)$. Then there is $b\in B$ such that $f(b)=b^{\prime}$.
Since $d_{\mathbb{H}}(B,C)<\delta_{\varepsilon}$ there is $c\in C$ such that
$d(b,c)<\delta_{\varepsilon}$. Since $\delta_{\varepsilon}<1$ we have $c\in
B+1$. It follows that $d(f(b),f(c))<\varepsilon$. It follows that $f(B)\subset
f(C)+\varepsilon.$ By a similar argument $f(C)\subset f(B)+\varepsilon.$ Hence
$d_{\mathbb{H}}(f(B),f(C))<\varepsilon$.

(v) This is Hutchinson's theorem, \cite[p. 731]{hutchinson}, proved as
follows. Verify that if $\lambda\geq0$ is a uniform Lipschitz constant for all
$f\in\mathcal{F}$, namely $d_{\mathbb{X}}(f(x),f(y))\leq\lambda d_{\mathbb{X}%
}(x,y)$ for all $f\in\mathcal{F}$, for all $x,y\in\mathbb{X}$, then $\lambda$
is also a Lipschitz constant for $\mathcal{F}$, namely $d_{\mathbb{H}%
}(\mathcal{F}(B),\mathcal{F}(C))\leq\lambda d_{\mathbb{H}}(B,C)$ for all
$B,C\in\mathbb{H}$. If $f:\mathbb{X\rightarrow}\mathbb{X}$ is a contraction
mapping for each $f\in\mathcal{F}$, then we can choose $\lambda<1$. It follows
that $\mathcal{F}:\mathbb{H\rightarrow H}$ is a contraction mapping.
\end{proof}

For $B\subset\mathbb{X}$, let $\mathcal{F}^{k}(B)$ denote the $k$-fold
composition of $\mathcal{F}$, i.e., the union of $f_{i_{1}}\circ f_{i_{2}%
}\circ\cdots\circ f_{i_{k}}(B)$ over all finite words $i_{1}i_{2}\cdots i_{k}$
of length $k$ over the alphabet $\{1, \ldots, N\}$. Define $\mathcal{F}^{0}(B) := B.$

\begin{definition}
\label{attractdef}A nonempty compact set $A\subset\mathbb{X}$ is said to be an
\textbf{attractor} of the IFS $\mathcal{F}$ if
\begin{itemize}
\item[(i)] $\mathcal{F}(A)=A$ and

\item[(ii)] there exists an open set $U\subset\mathbb{X}$ such that $A\subset U$ and
$\lim_{k\rightarrow\infty}\mathcal{F}^{k}(B)=A,$ for all $B\in\mathbb{H(}U)$,
where the limit is with respect to the Hausdorff metric.
\end{itemize}
The largest open set $U$ such that $\mathrm{(ii)}$ is true is called the \textbf{basin of
attraction} (for the attractor $A$ of the IFS $\mathcal{F}$).
\end{definition}

Note that if $U_1$ and $U_2$ satisfy condition $\mathrm{(ii)}$ in Definition 2 for the same attractor $A$ then so does 
$U_1 \cup U_2$. We also remark that the invariance condition $\mathrm{(i)}$ is not needed; it follows from $\mathrm{(ii)}$ for $B := A$.

\begin{example}
An IFS $\cF = (\X; f_1, f_2, \ldots, f_N)$ is called \textbf{contractive} if each $f\in \cF$
is a contraction (with respect to the metric $d$), i.e., if there is a constant $s \in [0, 1)$
such that $d(f(x_1), f(x_2)) \leq s\,d(x_1, x_2)$, for all $x_1, x_2 \in \X$. By item $\mathrm{(v)}$ in Theorem 1, the mapping 
$\cF : \mathbb{H}(\X) \to \mathbb{H}(\X)$ is then also contractive on the complete metric space $(\mathbb{H}(\X); d_{\mathbb{H}})$ and thus possesses a unique attractor $A$. In this case, the basin of attraction for $A$ is $\X$.
\end{example}

We will use the following observation \cite[Proposition 3 (vii)]{lesniak},
\cite[Proposition 2.5.6]{edgar}.

\begin{lemma}
\label{intersectlemma}Let $\left\{  B_{k}\right\}  _{k=1}^{\infty}$ be a
sequence of nonempty compact sets such that $B_{k+1}\subset B_{k}$, for all
$k$. Then $\cap_{k\geq1}B_{k}=\lim_{k\rightarrow\infty}B_{k}$ where
convergence is with respect to the Haudorff metric.
\end{lemma}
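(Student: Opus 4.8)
The plan is to write $B := \bigcap_{k\geq 1} B_k$ for the candidate limit and to prove two things separately: that $B$ is a genuine element of $\mathbb{H}$, and that $d_{\mathbb{H}}(B_k, B) \to 0$.

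First I would check that $B$ is nonempty and compact. Each $B_k$ is a closed subset of the compact set $B_1$, and the family $\{B_k\}$ is nested and consists of nonempty compact sets, so the finite intersection property (Cantor's intersection theorem) gives $B \neq \emptyset$. Being an intersection of closed sets all contained in the compact set $B_1$, the set $B$ is itself compact, so $B \in \mathbb{H}$ and the limit candidate is well defined.

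Next I would exploit the definition $d_{\mathbb{H}}(B_k, B) = \inf\{r > 0 : B_k \subset B + r \text{ and } B \subset B_k + r\}$. The key simplification is that $B = \bigcap_j B_j \subset B_k$ for every $k$, so the inclusion $B \subset B_k + r$ holds automatically for every $r > 0$. Hence one half of the Hausdorff distance is controlled for free, and the whole problem reduces to showing that for each $\varepsilon > 0$ there is a $K$ with $B_k \subset B + \varepsilon$ for all $k \geq K$.

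For this reverse inclusion I would argue by contradiction. Fix $\varepsilon > 0$ and set $D_k := B_k \setminus (B + \varepsilon) = B_k \cap \{x : d(x, B) \geq \varepsilon\}$. Since $x \mapsto d(x,B)$ is continuous, each $D_k$ is a closed subset of the compact set $B_k$, hence compact, and $D_{k+1} \subset D_k$ because $B_{k+1} \subset B_k$. If no suitable $K$ existed, then $B_k \not\subset B + \varepsilon$ for infinitely many $k$, and nestedness would force every $D_k$ to be nonempty; Cantor's intersection theorem would then produce a point of $\bigcap_k D_k = B \cap \{x : d(x,B) \geq \varepsilon\}$, which is impossible since every point of $B$ has distance $0$ from $B$. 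Therefore some $D_K$ is empty, i.e.\ $B_K \subset B + \varepsilon$, and monotonicity propagates this to all $k \geq K$. Combined with the automatic inclusion from the previous paragraph, this yields $d_{\mathbb{H}}(B_k, B) \leq \varepsilon$ for all $k \geq K$, which is the claim. I do not expect a serious obstacle; the only point requiring care is the initial observation that the inclusion $B \subset B_k$ makes half of the Hausdorff distance vanish identically, after which all the content sits in the nested-compactness argument for the remaining inclusion.
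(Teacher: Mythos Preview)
Your argument is correct. The two key steps --- Cantor's intersection theorem to ensure $B\in\mathbb{H}$, and the nested-compactness contradiction applied to the sets $D_k=B_k\setminus(B+\varepsilon)$ to force the only nontrivial inclusion $B_k\subset B+\varepsilon$ --- are standard and cleanly executed. The phrasing ``for infinitely many $k$, and nestedness would force every $D_k$ to be nonempty'' is slightly roundabout (monotonicity of the $B_k$ already gives that if one $D_K$ is empty then so are all later ones, so the negation of the claim is simply that \emph{all} $D_k$ are nonempty), but the logic is sound.

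As for comparison: the paper does not actually prove this lemma. It merely records it as a known observation and points to \cite[Proposition~3~(vii)]{lesniak} and \cite[p.~68, Proposition~2.4.7]{edgar}. Your write-up therefore supplies a self-contained proof where the paper opts for a citation; the argument you give is essentially the classical one found in those references.
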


%We use the notation $\overline{S}$ to denote the closure of a set $S$.
%
\begin{theorem}
\label{attractorthm}Let $\mathcal{F}$ be an IFS with attractor $A$ and basin
of attraction $U.$ If $\mathcal{F}:\mathbb{H(}U)\mathbb{\rightarrow H(}U)$ is
continuous then%
\[
A=\bigcap\limits_{K\geq1}\overline{\bigcup_{k\geq K}\mathcal{F}^{k}(B)}%
\quad\text{ for all }B\subset U\text{ such that }\overline{B}\in
\mathbb{H(}U)\text{.}%
\]

\end{theorem}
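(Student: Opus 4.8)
The plan is to reduce the statement to the behaviour of the single orbit $A_k := \mathcal{F}^k(\overline{B})$, $k\ge 0$, where $\overline{B}\in\mathbb{H}(U)$ by hypothesis. First I would record the elementary identity $\overline{\mathcal{F}^k(B)}=A_k$ for every $k$. Indeed, for a continuous $f$ and a set $S$ with $\overline{S}$ compact one has $\overline{f(S)}=f(\overline{S})$ (the image of a compact set is compact, hence closed and contains $f(S)$, while $f(\overline S)\subset\overline{f(S)}$ by continuity), and since $\mathcal{F}$ is a finite union this gives $\overline{\mathcal{F}(S)}=\mathcal{F}(\overline{S})$; an induction on $k$ then yields $\overline{\mathcal{F}^k(B)}=\mathcal{F}^k(\overline{B})=A_k$. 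Because $\mathcal{F}$ maps $\mathbb{H}(U)$ into itself, each $A_k$ lies in $\mathbb{H}(U)$, and because $U$ is the basin of attraction with $\overline{B}\in\mathbb{H}(U)$, Definition~\ref{attractdef} gives $A_k\to A$ in $d_{\mathbb{H}}$. Writing $G_K:=\overline{\bigcup_{k\ge K}\mathcal{F}^k(B)}$, the identity above together with $\overline{\bigcup_i S_i}=\overline{\bigcup_i\overline{S_i}}$ gives $G_K=\overline{\bigcup_{k\ge K}A_k}$, and clearly $G_{K+1}\subset G_K$.

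The key step is to show that each $G_K$ is a nonempty compact subset of $U$; this is where the hypotheses must do real work, since $\mathbb{X}$ is not assumed locally compact. I would prove that $S_K:=A\cup\bigcup_{k\ge K}A_k$ is sequentially compact. Given a sequence in $S_K$, either infinitely many terms lie in one of the compact sets $A$ or $A_k$, in which case a convergent subsequence exists there, or one may extract a subsequence $x_j\in A_{k_j}$ with $k_j\to\infty$; since $A_{k_j}\to A$, each $x_j$ lies within $d_{\mathbb{H}}(A_{k_j},A)\to 0$ of a point $a_j\in A$, and compactness of $A$ lets me pass to a subsequence $a_j\to a\in A$, forcing $x_j\to a\in S_K$. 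Thus $S_K$ is compact, and $G_K=\overline{\bigcup_{k\ge K}A_k}$ is a closed subset of $S_K\subset U$, hence a nonempty compact subset of $U$.

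With compactness in hand I would establish $\lim_{K\to\infty}G_K=A$ in $d_{\mathbb{H}}$. Fix $\varepsilon>0$ and choose $K_0$ with $d_{\mathbb{H}}(A_k,A)<\varepsilon$ for all $k\ge K_0$. For $K\ge K_0$ one inclusion is $A\subset A_K+\varepsilon\subset G_K+\varepsilon$, using $A_K\subset G_K$; for the reverse inclusion every $A_k$ with $k\ge K$ satisfies $A_k\subset A+\varepsilon$, so $\bigcup_{k\ge K}A_k\subset A+\varepsilon$, and passing to closures gives $G_K\subset\{x:d(x,A)\le\varepsilon\}\subset A+2\varepsilon$. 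Hence $d_{\mathbb{H}}(G_K,A)\to 0$.

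Finally, $\{G_K\}_{K\ge1}$ is a decreasing sequence of nonempty compact sets, so Lemma~\ref{intersectlemma} gives $\bigcap_{K\ge1}G_K=\lim_{K\to\infty}G_K=A$, which is the desired identity. I expect the compactness argument of the second paragraph to be the main obstacle: the remaining steps are either the definition of the basin of attraction or routine manipulation of closures and the Hausdorff metric, but the compactness of $G_K$ cannot be assumed without local compactness and must instead be extracted from the fact that $A_k$ converges to the \emph{compact} limit $A$.
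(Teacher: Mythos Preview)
Your argument is correct. The paper's own proof takes a very different route: it reduces to the case $B\in\mathbb{H}(U)$ by citing a proposition of Le\'{s}niak, notes the general identity $\overline{\bigcup_k\mathcal{F}^k(B)}=\overline{\bigcup_k\overline{\mathcal{F}^k(B)}}$, and then simply appeals to Theorem~3.82 of Aliprantis--Border (which identifies the Hausdorff limit with the Kuratowski upper limit). In other words, the paper outsources the substance to the literature.

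Your approach is self-contained and more transparent. The reduction $\overline{\mathcal{F}^k(B)}=\mathcal{F}^k(\overline{B})$ plays the role of the paper's citation of Le\'{s}niak, and your sequential-compactness argument for $S_K=A\cup\bigcup_{k\ge K}A_k$ replaces the appeal to Aliprantis--Border; Lemma~\ref{intersectlemma} then does the rest. One bonus your route makes visible: the only facts you actually use are continuity of the individual maps $f_n$ (built into the definition of IFS) and the Hausdorff convergence $A_k\to A$ coming from Definition~\ref{attractdef}. The stated hypothesis that $\mathcal{F}:\mathbb{H}(U)\to\mathbb{H}(U)$ be continuous is never invoked in your argument, and even the assertion that each $A_k$ lies in $\mathbb{H}(U)$ (which you derive from $\mathcal{F}$ mapping $\mathbb{H}(U)$ into itself) is not needed downstream---compactness of $S_K$ and the identification $\bigcap_K G_K=A$ go through with $A_k\in\mathbb{H}(\mathbb{X})$ alone.
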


The quantity on the right-hand side here is sometimes called the
\textit{topological upper limit }of the sequence $\left\{ \cF^{k}(B)\right\}
_{k=1}^{\infty}$.
\begin{proof}
Note that the proof can be carried out under the assumption that $B \in \mathbb{H}(U)$.
(It then follows from [12, Proposition 3(i)] that Theorem 2 is true for all $B \subset U$
such that $\overline{B} \in \mathbb{H}(U)$.) Under this assumption and with the fact that $\overline{\bigcup_k \cF^k (B)} =
\overline{\bigcup_k \overline{\cF^k(B)}}$ (see, for instance, \cite{Engel}) the statement follows from Theorem 3.82 in \cite{AB}.
\end{proof}

We will also need the following observation.

\begin{lemma}
\label{uniformlemma} Let $\mathbb{X}$ be locally compact. Let $\mathcal{F}%
=\left(  \mathbb{X};f_{1},f_{2},...,f_{N}\right)  $ be an IFS with attractor
$A$ and basin of attraction $U$. For any given $\varepsilon>0$ there is an
integer $L$ such that for each $x\in A+\varepsilon$ there is an integer $l\leq
L$ such that%
\[
d_{\mathbb{H}}(A,\mathcal{F}^{l}(\left\{  x\right\}  ))<\varepsilon.
\]

\end{lemma}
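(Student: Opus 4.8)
The plan is to upgrade the pointwise convergence built into the attractor property to a uniform bound, using continuity of the set-valued iterates together with compactness. As a preliminary reduction I assume, as I may, that $\overline{A+\varepsilon}\subseteq U$; since $A$ is compact, $U$ is open, and $A\subseteq U$, this holds once $\varepsilon$ is small enough (and $\overline{A+\varepsilon}$ is compact because $\mathbb{X}$ is locally compact), which is the regime in which the lemma is used. In particular $\{x\}\in\mathbb{H}(U)$ for every $x\in\overline{A+\varepsilon}$.

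First I would record the pointwise statement. Fix $x\in\overline{A+\varepsilon}$. Since $\{x\}$ is a nonempty compact subset of $U$, Definition \ref{attractdef}(ii) gives $\lim_{k\to\infty}\mathcal{F}^k(\{x\})=A$ in the Hausdorff metric, so there is an integer $l_x$ with $d_{\mathbb{H}}(A,\mathcal{F}^{l_x}(\{x\}))<\varepsilon$.

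Next I would spread this inequality over a neighborhood of $x$. The embedding $\iota:\mathbb{X}\to\mathbb{H}$, $\iota(y)=\{y\}$, is an isometry since $d_{\mathbb{H}}(\{y\},\{y'\})=d_{\mathbb{X}}(y,y')$, and by Theorem \ref{ctythm}(iv) the map $\mathcal{F}:\mathbb{H}\to\mathbb{H}$ is continuous because $\mathbb{X}$ is locally compact. Hence $\mathcal{F}^{l_x}\circ\iota$ is continuous, and therefore so is the real-valued map $y\mapsto d_{\mathbb{H}}(A,\mathcal{F}^{l_x}(\{y\}))$. As this map takes a value $<\varepsilon$ at $y=x$, there is an open neighborhood $V_x$ of $x$ on which $d_{\mathbb{H}}(A,\mathcal{F}^{l_x}(\{y\}))<\varepsilon$ for all $y\in V_x$.

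Finally I would compress the indices via compactness. The sets $\{V_x:x\in\overline{A+\varepsilon}\}$ form an open cover of the compact set $\overline{A+\varepsilon}$, so finitely many of them, say $V_{x_1},\dots,V_{x_m}$, already cover it. Put $L:=\max\{l_{x_1},\dots,l_{x_m}\}$. Then any $x\in A+\varepsilon\subseteq\overline{A+\varepsilon}$ lies in some $V_{x_j}$, and the index $l:=l_{x_j}\leq L$ satisfies $d_{\mathbb{H}}(A,\mathcal{F}^{l}(\{x\}))<\varepsilon$, which is exactly the claim. The main obstacle is the uniformity: the convergence in Definition \ref{attractdef}(ii) furnishes an index $l_x$ separately for each starting point with no intrinsic bound, so everything rests on showing that one such index works on a full neighborhood. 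That step is precisely where local compactness enters, through the continuity of the iterates $\mathcal{F}^{l}$ on $\mathbb{H}$ in Theorem \ref{ctythm}(iv); once neighborhoods are available, compactness of $\overline{A+\varepsilon}$ reduces the infinitely many indices to a finite maximum.
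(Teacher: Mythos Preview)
Your proof is correct and follows essentially the same route as the paper's: obtain a pointwise index from the attractor property, use continuity of $\mathcal{F}^{l}$ on $\mathbb{H}$ (via Theorem~\ref{ctythm}(iv)) to spread the estimate to an open neighborhood, and then extract a uniform bound by covering the compact set $\overline{A+\varepsilon}$ with finitely many such neighborhoods. The only cosmetic differences are that the paper first secures $d_{\mathbb{H}}(A,\mathcal{F}^{l(x,\varepsilon)}(\{x\}))<\varepsilon/2$ before enlarging to $\varepsilon$ on a neighborhood, and that you make explicit the standing assumption $\overline{A+\varepsilon}\subseteq U$, which the paper uses tacitly.
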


\begin{proof}
For each $x\in\overline{A+\varepsilon}$ there is an integer $l(x,\varepsilon)$
so that $d_{H}(A,\mathcal{F}^{l(x,\varepsilon)}(\left\{  x\right\}
))<\varepsilon/2$.

Since $\mathbb{X}$ is locally compact it follows that $\mathcal{F}%
^{l(x,\varepsilon)}:\mathbb{H\rightarrow H}$ is continuous. Since
$\mathcal{F}^{l(x,\varepsilon)}:\mathbb{H\rightarrow H}$ is continuous there
is an open neighborhood $N(\left\{  x\right\}  )$ (in $\mathbb{H}$) of
$\left\{  x\right\}  $ such that $d_{\mathbb{H}}(A,\mathcal{F}^{l(x,\varepsilon
)}(Y))<\varepsilon$ for all $Y\in N(\left\{  x\right\}  )$. It follows, in
particular, that there is an open neighborhood $N(x)$ (in $\mathbb{X}$) of $x$
such that $d_{\mathbb{H}}(A,\mathcal{F}^{l(x,\varepsilon)}(\left\{  y\right\}
))<\varepsilon$ for all $y\in N(x)$. Also since $\mathbb{X}$ is locally
compact, there is a finite set of points $\left\{  x_{1},x_{2},...,x_{q}%
\right\}  $ such that $\overline{A+\varepsilon}\subset\cup_{i=1}^{q}N(x_{i})$.
Choose $L := \max_{i}l(x_{i},\varepsilon)$.
\end{proof}

\section{Fractal interpolants as fixed points of operators}

Let $\{(X_{j},Y_{j}):j=0,1,...,N\}$ denote the cartesian coordinates of a
finite set of points in the Euclidean plane, with
\[
X_{0}<X_{1}<...<X_{N}\text{.}%
\]
Let $I$ denote the closed interval $[X_{0},X_{N}]$. For $n=1,2,...,N$, let
$l_{n}:I\rightarrow\lbrack X_{n-1},X_{n}]$ be a continuous bijection. Let
$L:I\rightarrow I$ be such that%
\[
L(x) :=l_{n}^{-1}(x)\text{ for }x\in [X_{n-1},X_{n})
\]
for $n=1,2,...,N$ (with the tacit understanding that for $n=N$ the interval is $[X_{N-1},X_N]$). Let $S: I\rightarrow\mathbb{R}$ be bounded and
piecewise continuous, where the only possible discontinuities are finite jumps occuring at the
points in $\{X_{1},X_{2},...,X_{N-1}\}$. Let
\[
s :=\max\{\left\vert S(x)\right\vert :x\in I\}.
\]

Denote by ${C} = {C}(I)\ $ the set of continuous functions
$f:I\rightarrow\mathbb{R}$. It is well-known that $({C},d_{\infty})$ is a
complete metric space, where
\[
d_{\infty}(f,g)=\max\{\left\vert f(x)-g(x)\right\vert :x\in I\}\text{.}%
\]
Let
\begin{align*}
{C}^{\ast}  &  :=\{f\in{C}:f(X_{0})=Y_{0},f(X_{N})=Y_{N}\},\\
{C}^{\ast\ast}  &  :=\{f\in{C}:f(X_{j})=Y_{j}\text{ for }j=0,1,...,N\}.
\end{align*}
Note that ${C}^{\ast}$ and ${C}^{\ast\ast}$ are closed subspaces of ${C}$
with ${C}^{\ast\ast}\subset{C}^{\ast}\subset{C}$. We say that each of the
functions in ${C}^{\ast\ast}$ \textit{interpolates the data} $\{(X_{j}%
,Y_{j})\st j = 0,1,\ldots, N\}$.

Let $b\in{C}^{\ast}$ and $h\in{C}^{\ast\ast}$. Define $T:{C^*}\rightarrow{C^{**}}$
by
\be\label{Top}
Tg := h+S\cdot(g\circ L-b\circ L).
\ee
$T$ is a form of Read-Bajraktarevi\'{c} operator as defined in
\cite{massopust}. The following result is a corrected version of \cite[Theorem
5.1, p. 136]{massopust}. See also \cite[Theorem 3, p. 731]{hutchinson}.

\begin{theorem}
\label{operatorthm} The mapping $T:{C^*}\rightarrow{C^{**}}$ obeys
\[
d_{\infty}(Tg_1,Tg_2)\leq s \,d_{\infty}(g_1,g_2)
\]
for all $g_1,g_2\in{C}^*$. In particular, if $s<1$ then $T$ is a contraction and it
possesses a unique fixed point $f\in{C}^{\ast\ast}.$
\end{theorem}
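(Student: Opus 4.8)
The plan is to establish the Lipschitz estimate directly from the definition of $T$ and then invoke the Banach fixed point theorem. The key observation is that the additive terms $h$ and $b \circ L$ cancel when we compute the difference $Tg_1 - Tg_2$, leaving only the contribution of $S \cdot (g \circ L)$.

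**First I would** fix $g_1, g_2 \in C$ and an arbitrary point $x \in I$, and write out the difference explicitly:
\[
Tg_1(x) - Tg_2(x) = S(x) \cdot \bigl(g_1(L(x)) - g_2(L(x))\bigr),
\]
using that the terms $h(x)$ and $-S(x)\, b(L(x))$ appear identically in both $Tg_1(x)$ and $Tg_2(x)$ and hence subtract away. Taking absolute values and bounding $|S(x)| \leq s$ gives
\[
|Tg_1(x) - Tg_2(x)| \leq s\,|g_1(L(x)) - g_2(L(x))|.
\]
Since $L(x) \in I$, the right-hand factor is bounded by $d_\infty(g_1, g_2)$, and taking the maximum over $x \in I$ yields the claimed inequality $d_\infty(Tg_1, Tg_2) \leq s\, d_\infty(g_1, g_2)$.

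**The second step** is to apply the contraction mapping principle. When $s < 1$, the estimate above shows $T$ is a contraction on the complete metric space $(C, d_\infty)$; more precisely, $T$ maps the closed (hence complete) subspace $C^*$ into $C^{**} \subset C^*$, so the Banach fixed point theorem applies on $C^*$ and produces a unique fixed point $f$. Because $T(C^*) \subset C^{**}$, the fixed point $f = Tf$ automatically lies in $C^{**}$, so it interpolates the data; this is the payoff of defining $T$ with codomain $C^{**}$.

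**The main obstacle**, and the reason this is a \emph{corrected} version of the earlier result, is verifying that $T$ is genuinely well-defined as a map into $C^{**}$ --- that is, that $Tg$ is actually continuous on all of $I$ despite $S$ and $L$ having discontinuities at the interior nodes $X_1, \dots, X_{N-1}$. The delicate point is continuity at each $X_n$: the operator must be built so that the one-sided limits of $S \cdot (g \circ L - b \circ L)$ agree with the value forced by $h \in C^{**}$ at the interpolation nodes, and that the interpolation conditions $Tg(X_j) = Y_j$ hold. I would check this by examining the left and right limits at each $X_n$ using the relation $L(x) = l_n^{-1}(x)$ on $[X_{n-1}, X_n)$ together with the facts that $g, b \in C^*$ share the endpoint values $Y_0, Y_N$ and that $h$ carries the full interpolation data; the cancellation structure of $g - b$ at the endpoints is what rescues continuity. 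Once well-definedness into $C^{**}$ is confirmed, the Lipschitz computation and the fixed-point argument are routine.
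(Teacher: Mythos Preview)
Your proposal is correct and follows essentially the same route as the paper: the paper also first records that $T$ is well-defined into $C^{**}$ via the one-line check $Tg(X_i-) = h(X_i) = Tg(X_i+)$, then cancels the $h$ and $b\circ L$ terms to obtain the same Lipschitz estimate, and finally invokes the contraction mapping theorem on the closed subspace together with $T(C^*)\subset C^{**}$ to locate the fixed point in $C^{**}$. Your discussion of the well-definedness step is in fact more detailed than the paper's, but the underlying mechanism---the vanishing of $g-b$ at $X_0,X_N$ for $g\in C^*$ forcing the one-sided limits of the $S\cdot(g\circ L - b\circ L)$ term to be zero at each interior node---is exactly what the paper's terse equality encodes.
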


\begin{proof}
The operator $T$ is well-defined. Indeed, for $i = 1, \ldots, N-1$,
\[
Tg(X_i-) = h(X_i) = Tg(X_i+).
\]
To prove contractivity in the Chebyshev norm $\| \cdot \|_\infty$, observe that
\begin{align*}
d_{\infty}(Tg_1,Tg_2)  &  =\max\{\left\vert S(x)(g_1(L(x))-g_2(L(x)))\right\vert
:x\in I\}\\
&  \leq s\max\{\left\vert (g_1(l_{n}^{-1}(x))-g_2(l_{n}^{-1}(x)))\right\vert
:x\in\lbrack X_{n-1},X_{n}],n=1,2,...,N\}\\
&  =s\, d_{\infty}(g_1,g_2).
\end{align*}
The existence of a unique fixed point $f\in{C}$ (when $s<1)$ follows from the
contraction mapping theorem. Since $f({C}^{\ast})\subset{C}^{\ast\ast}$ and
$\left(  {C}^{\ast\ast},d_{\infty}\right)  $ is closed, hence complete$,$ it
follows that $f\in{C}^{\ast\ast}$.
\end{proof}

Note that $Tg=H+S\cdot g\circ L$ where $H=h-S\cdot b\circ L$. This tells us
that a fractal interpolation function $f$ is uniquely defined by three
functions $H,$ $S$, and $L$, of the special forms defined above.

The fixed point $f$ of $T$ interpolates the data $\left\{  (X_{j}%
,Y_{j}):j=0,1,2,...,N\right\}  $ and is an example of a fractal interpolation
function \cite{barninterp}. One way to evaluate $f$ is to use
\[
f=\lim_{k\rightarrow\infty}T^{k}(f_{0}),
\]
where $f_{0}\in{C}^{\ast}$. The proof of the contraction mapping theorem gives also an
estimate for the rate of convergence (cf. \cite{SB}], Theorem 5.2.3.):
\be\label{error}
\left\Vert f-T^{k}(f_{0})\right\Vert _{\infty}\leq \frac{s^{k}}{1-s}\left\Vert
f_1 - f_{0}\right\Vert _{\infty}.
\ee
In addition, an estimate for the operator $T$ can also be derived (cf. \cite{M2}):
\[
\|T\|_\infty \leq \frac{1+s}{1-s}.
\]

\section{\label{metricsec}The metric space $(I\times\mathbb{R},d_{q})$}

The following metric generalizes the ``taxi-cab" metric. We will need it in
the proof of Theorem \ref{ifsthm}.

\begin{proposition}\label{prop1}
\label{metricthm}Let $\alpha,\beta>0$ and $q:I\rightarrow\mathbb{R}$. Let
$d_{q}:\left(  I\times\mathbb{R}\right)  \times\left(  I\times\mathbb{R}%
\right)  \rightarrow\lbrack0,\infty)$ be defined by
\[
d_{q}(\left(  x_{1},y_{1}\right)  ,\left(  x_{2},y_{2}\right)  ):=\alpha
\left\vert x_{1}-x_{2}\right\vert +\beta\left\vert \left(  y_{1}%
-q(x_{1})\right)  -\left(  y_{2}-q(x_{2})\right)  \right\vert ,
\]
for all $(x_{1},y_{1})$, $(x_{2},y_{2})\in I\times\mathbb{R}.$ Then $d_{q}$ is
a metric on $I\times\mathbb{R}$. If $q$ is continuous then $\left(
I\times\mathbb{R},d_{q}\right)  $ is a complete metric space.
\end{proposition}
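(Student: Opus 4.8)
The plan is to recognize $d_q$ as the pullback of a standard weighted taxi-cab metric under a bijection, which makes both the metric axioms and completeness essentially automatic. I would define $\Phi:I\times\R\to I\times\R$ by $\Phi(x,y):=(x,\,y-q(x))$; this is a bijection with inverse $(x,z)\mapsto(x,\,z+q(x))$. On the target copy of $I\times\R$ I introduce the weighted taxi-cab metric
\[
\rho\bigl((x_1,z_1),(x_2,z_2)\bigr):=\alpha|x_1-x_2|+\beta|z_1-z_2|,
\]
which is a genuine metric, being a positive-weighted sum of the Euclidean metrics on $I$ and on $\R$. A one-line computation then shows $d_q(p_1,p_2)=\rho(\Phi(p_1),\Phi(p_2))$ for all $p_1,p_2\in I\times\R$, so $\Phi$ is by construction an isometry from $(I\times\R,d_q)$ onto $(I\times\R,\rho)$.

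First I would verify that $d_q$ is a metric by transporting the axioms through $\Phi$. Symmetry and the triangle inequality for $d_q$ follow immediately from those for $\rho$. For the identity of indiscernibles, $d_q(p_1,p_2)=0$ forces $\alpha|x_1-x_2|=0$ and $\beta|(y_1-q(x_1))-(y_2-q(x_2))|=0$; since $\alpha,\beta>0$ the first equation gives $x_1=x_2$, whence $q(x_1)=q(x_2)$, and the second then gives $y_1=y_2$. This is the one spot where a little care is needed, as it uses injectivity of $\Phi$ (equivalently, that $q$ is a well-defined function, so $x_1=x_2$ implies $q(x_1)=q(x_2)$); no continuity of $q$ enters at this stage.

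For completeness I would argue that a bijective isometry transfers completeness. The space $(I\times\R,\rho)$ is complete: $I=[X_0,X_N]$ is closed in $\R$, hence complete, $\R$ is complete, and the weighted taxi-cab metric on a finite product of complete spaces is complete, because a $\rho$-Cauchy sequence is Cauchy in each coordinate (here $\alpha,\beta>0$ is used again). Given a $d_q$-Cauchy sequence $(p_n)$, the images $\Phi(p_n)$ form a $\rho$-Cauchy sequence and hence converge to some $w\in I\times\R$; writing $w=\Phi(p)$ with $p:=\Phi^{-1}(w)$, the isometry gives $d_q(p_n,p)=\rho(\Phi(p_n),w)\to 0$, so $(p_n)$ converges in $(I\times\R,d_q)$.

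I expect no serious obstacle: once the change of variables $\Phi$ is written down, the conclusion is forced, and the only genuinely delicate point is the identity-of-indiscernibles step noted above. It is worth remarking that the isometry argument in fact yields completeness for every $q$, continuous or not; continuity of $q$ is what one would need to ensure that $\Phi$ is a homeomorphism and hence that the $d_q$-topology coincides with the usual product topology on $I\times\R$, but it plays no role in completeness itself. Should one prefer to bypass $\Phi$ and argue directly, the same computation appears: from a $d_q$-Cauchy sequence one extracts $x^\ast=\lim x_n\in I$ and $w^\ast=\lim\bigl(y_n-q(x_n)\bigr)\in\R$, sets $y^\ast:=w^\ast+q(x^\ast)$, and checks $d_q((x_n,y_n),(x^\ast,y^\ast))\to 0$ directly, convergence being measured in $d_q$ rather than in the Euclidean metric.
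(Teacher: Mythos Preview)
Your proposal is correct and takes a genuinely different route from the paper. The paper verifies the metric axioms by direct expansion (symmetry, positive-definiteness, and a line-by-line triangle inequality) and then, for completeness, argues that a $d_q$-Cauchy sequence has $\{x_k\}$ Euclidean-Cauchy with limit $x^\ast$, invokes continuity of $q$ to get $q(x_k)\to q(x^\ast)$, deduces that $\{y_k\}$ itself converges in the Euclidean sense to some $y^\ast$, and concludes. Your pullback argument via the bijection $\Phi(x,y)=(x,\,y-q(x))$ packages all of the metric axioms into the single observation that $d_q=\rho\circ(\Phi\times\Phi)$ for the standard weighted taxi-cab metric $\rho$, and then transports completeness of $(I\times\R,\rho)$ back along the isometry. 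Beyond being shorter, your approach buys a sharper conclusion: as you correctly point out, the isometry argument never uses continuity of $q$, so $(I\times\R,d_q)$ is in fact complete for \emph{every} $q:I\to\R$. The paper's use of continuity is only needed to make $\{y_k\}$ converge in the Euclidean metric, but that is irrelevant to $d_q$-convergence; your direct variant at the end (set $w^\ast=\lim(y_n-q(x_n))$ and $y^\ast=w^\ast+q(x^\ast)$, then check $d_q$-convergence) makes this explicit. What continuity of $q$ actually delivers, as you note, is that $\Phi$ and $\Phi^{-1}$ are homeomorphisms, so that $d_q$ induces the usual product topology on $I\times\R$; this is what the paper implicitly relies on later when it treats $d_q$-compactness and Euclidean compactness interchangeably.
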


\begin{proof}
Clearly $d_{q}(\left(  x_{2},y_{2}\right)  ,\left(  x_{1},y_{1}\right)
)=d_{q}(\left(  x_{1},y_{1}\right)  ,\left(  x_{2},y_{2}\right)  )\geq0$.
Suppose that $d_{q}(\left(  x_{1},y_{1}\right)  ,\left(  x_{2},y_{2}\right)
)=0$. Then $\alpha\left\vert x_{1}-x_{2}\right\vert +\beta\left\vert \left(
y_{1}-q(x_{1})\right)  -\left(  y_{2}-q(x_{2})\right)  \right\vert =0$ which
implies $x_{1}=x_{2}$. Hence $\left\vert \left(  y_{1}-q(x_{1})\right)
-\left(  y_{2}-q(x_{1})\right)  \right\vert =0$ which implies $y_{1}=y_{2}$.

Demonstration that $d$ obeys the triangle inequality. Let $\left(  x_{i}%
,y_{i}\right)  \in I\times\mathbb{R}$, for $i=1,2,3.$ Write $q_{i}=q(x_{i})$
for $i=1,2,3$. We have%
\begin{align*}
&  d_{q}(\left(  x_{1},y_{1}\right)  ,\left(  x_{2},y_{2}\right)
)+d_{q}(\left(  x_{2},y_{2}\right)  ,\left(  x_{3},y_{3}\right)  )\\
&  =\alpha\left\vert x_{1}-x_{2}\right\vert +\beta\left\vert \left(
y_{1}-q_{1}\right)  -\left(  y_{2}-q_{2}\right)  \right\vert +\alpha\left\vert
x_{2}-x_{3}\right\vert +\beta\left\vert \left(  y_{2}-q_{2}\right)  -\left(
y_{3}-q_{3}\right)  \right\vert \\
&  =\alpha(\left\vert x_{1}-x_{2}\right\vert +\left\vert x_{2}-x_{3}%
\right\vert )+\beta(\left\vert \left(  y_{1}-q_{1}\right)  -\left(
y_{2}-q_{2}\right)  \right\vert +\left\vert \left(  y_{2}-q_{2}\right)
-\left(  y_{3}-q_{3}\right)  \right\vert )\\
&  \geq\alpha(\left\vert x_{1}-x_{3}\right\vert )+\beta(\left\vert \left(
y_{1}-q_{1}\right)  -\left(  y_{2}-q_{2}\right)  \right\vert +\left\vert
\left(  y_{2}-q_{2}\right)  -\left(  y_{3}-q_{3}\right)  \right\vert )\\
&  \geq\alpha(\left\vert x_{1}-x_{3}\right\vert )+\beta(\left\vert \left(
y_{1}-q_{1}\right)  -\left(  y_{3}-q_{3}\right)  \right\vert )=d_{q}(\left(
x_{1},y_{1}\right)  ,\left(  x_{3},y_{3}\right)  )\text{.}%
\end{align*}

To prove completeness in the case that $q$ is continuous, let $\left\{
(x_{k},y_{k})\right\}  _{k=1}^{\infty}$ denote a Cauchy sequence with respect
to the metric $d_{q}$. Given $\varepsilon>0$ we can find an integer
$N(\varepsilon)$ so that
\[
\alpha\left\vert x_{k}-x_{l}\right\vert +\beta\left\vert \left(  y_{k}%
-q(x_{k})\right)  -\left(  y_{l}-q(x_{l})\right)  \right\vert <\varepsilon
\]
whenever $k,l>N(\varepsilon)$. It follows that $\left\{  x_{k}\right\}  $ is a
Cauchy sequence with respect to the Euclidean norm, and so it converges, with
limit $x^{\ast}\in I$. Since $q$ is continuous, it now follows that $\left\{
q(x_{k})\right\}  $ converges to some limit $q^{\ast}\in\mathbb{R}$. In turn,
it follows that $\left\{  y_{k}\right\}  $ converges to some $y^{\ast}%
\in\mathbb{R}$. Hence $\left\{  (x_{k},y_{k})\right\}  _{k=1}^{\infty}$
converges to $(x^{\ast},y^{\ast})\in I\times\mathbb{R}$. It follows that
$\left(  I\times\mathbb{R},d_q\right)  $ is complete.
\end{proof}

\section{Fractal interpolants as attractors of iterated function systems}

Here we characterize the graph of the fixed point $f$ of $T$ as an attractor
of an IFS. Define $w_{n}:I\times\mathbb{R\rightarrow}I\times\mathbb{R}$ by%
\[
w_{n}(x,y) :=(l_{n}(x),h(l_{n}(x))+S(l_{n}(x))(y-b(x)))\text{.}%
\]
Define an IFS by%
\[
\mathcal{W}:=(I\times\mathbb{R};w_{1},w_{2},...,w_{N})\text{.}%
\]

Here we make use of the metric $d_{q}$ of Proposition \ref{metricthm} with $q=f$,
the fixed point of $T$. Let $\eta > 0$ and let
\[
\mathbb{X} :=\left\{  (x,y)\in I\times \R : \left\vert y-f(x)\right\vert \leq \eta\right\}
.
\]
It is readily verified that, when Theorem \ref{operatorthm} holds, namely when
$s<1,\mathcal{W}(\mathbb{X)} \subset\mathbb{X}$. The following theorem gives
conditions under which (i) the IFS $(\mathbb{X};w_{1},w_{2},...,w_{N})$ is
contractive with respect to $d_{f}$ and (ii) $\mathcal{W}$ has a unique
attractor. This result is a substantial generalization of \cite[Theorem 5.3,
p. 140]{massopust} which would require, in the present setting, that $h$ is
uniformly Lipschitz. Here, we avoid this restriction by using the metric
$d_{q}$ with $q=f$.

\begin{theorem}\label{thm4}
\label{ifsthm} Let $s<1$ and let $f\in{C}^{\ast\ast}$ be the fixed point of
$T,$ as in Theorem \ref{operatorthm}. Let $l_{n}:I\rightarrow I$\ have uniform
Lipschitz constant $\lambda_{l}<1$, such that $\left\vert l_{n}(x_{1}%
)-l_{n}(x_{2})\right\vert \leq\lambda_{l}\left\vert x_{1}-x_{2}\right\vert $
for all $x_{1},x_{2}\in I,$ for all $n$. Let $S:I\rightarrow\lbrack-s,s]$ have
Lipschitz constant $\lambda_{S}$, so that $\left\vert S(x_{1})-S(x_{2}%
)\right\vert \leq\lambda_{S}\left\vert x_{1}-x_{2}\right\vert $ for all
$x_{1},x_{2}\in I.$ Then the IFS $(\mathbb{X};w_{1},w_{2},...,w_{N})$ is
contractive with respect to the metric $d_{f}$ with $\alpha=1$ and
$0<\beta<\,\left(  1-\lambda_{l}\right)  /\lambda_{l}\lambda_S\eta$. In
particular, under these conditions, the IFS $\mathcal{W}$ has a unique
attractor $A=\Gamma(f)$, the graph of $f$, with basin of attraction
$I\times\mathbb{R}$.
\end{theorem}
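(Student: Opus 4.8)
The plan is to establish the two assertions in order: first that each $w_n$ is a contraction on $(\mathbb{X}, d_f)$, and then that contractivity together with the invariance $\mathcal{W}(\mathbb{X}) \subset \mathbb{X}$ yields the stated attractor. For the contractivity estimate I would take two points $(x_1,y_1), (x_2,y_2) \in \mathbb{X}$ and compute $d_f(w_n(x_1,y_1), w_n(x_2,y_2))$ directly from the definition of $d_f$ with $\alpha = 1$. The key simplification is that $q = f$ is the fixed point of $T$, so $f = h + S\cdot(f\circ L - b \circ L)$, and on the interval $[X_{n-1},X_n]$ this reads $f(l_n(x)) = h(l_n(x)) + S(l_n(x))(f(x) - b(x))$. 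Substituting this into the second coordinate of $w_n$ causes the $h(l_n(x))$ and $b(x)$ terms to cancel against the $q$-shift in $d_f$, leaving the vertical part of $w_n(x,y)$ minus $f(l_n(x))$ equal to $S(l_n(x))(y - f(x))$.

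Carrying this through, the first term of $d_f(w_n(x_1,y_1), w_n(x_2,y_2))$ is $|l_n(x_1) - l_n(x_2)| \le \lambda_l|x_1 - x_2|$, and the second term is $\beta\,|S(l_n(x_1))(y_1 - f(x_1)) - S(l_n(x_2))(y_2 - f(x_2))|$. I would bound this second term by adding and subtracting $S(l_n(x_1))(y_2 - f(x_2))$, splitting it into $\beta|S(l_n(x_1))|\,|(y_1-f(x_1)) - (y_2 - f(x_2))|$ plus $\beta|S(l_n(x_1)) - S(l_n(x_2))|\,|y_2 - f(x_2)|$. The first of these is at most $s$ times the second term of $d_f$, using $|S| \le s$; the second is bounded using the Lipschitz constant $\lambda_S$ for $S$, the Lipschitz constant $\lambda_l$ for $l_n$, and the fact that $(x_2,y_2) \in \mathbb{X}$ forces $|y_2 - f(x_2)| \le \eta$, giving a bound $\beta\lambda_S\lambda_l\eta\,|x_1 - x_2|$. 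Collecting the coefficient of $|x_1 - x_2|$ yields $\lambda_l + \beta\lambda_S\lambda_l\eta$, and the coefficient of the vertical part is $s$. Choosing $0 < \beta < (1-\lambda_l)/(\lambda_l\lambda_S\eta)$ makes the horizontal coefficient strictly less than $1$, so the overall contraction factor is $\max\{\lambda_l + \beta\lambda_S\lambda_l\eta,\ s\} < 1$.

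For the second assertion I would invoke Example~1: since each $w_n$ is a contraction on the complete metric space $(\mathbb{X}, d_f)$ (completeness coming from Proposition~\ref{prop1}, as $f$ is continuous, and the restriction to the closed strip $\mathbb{X}$ preserving completeness), the induced map $\mathcal{W}$ on $\mathbb{H}(\mathbb{X})$ is a contraction and has a unique attractor $A$ with basin $\mathbb{X}$. It then remains to identify $A$ with $\Gamma(f)$, which follows because $\Gamma(f)$ is nonempty and compact and satisfies $\mathcal{W}(\Gamma(f)) = \Gamma(f)$: indeed $w_n(x, f(x)) = (l_n(x), f(l_n(x)))$ by the fixed-point identity above, so each $w_n$ maps the graph onto the graph over $[X_{n-1}, X_n]$, and these pieces union to the whole graph. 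Uniqueness of the attractor then forces $A = \Gamma(f)$. Finally, to upgrade the basin from $\mathbb{X}$ to all of $I \times \mathbb{R}$, I would note that for any compact $B \subset I \times \mathbb{R}$ one application of the vertical contraction by factor $s$ (or finitely many) draws $B$ into the strip $\mathbb{X}$ for the chosen $\eta$, after which convergence to $A$ follows from the contraction result. The main obstacle I anticipate is the bookkeeping in the contractivity estimate, specifically ensuring the cancellation via the fixed-point identity is applied on the correct subinterval and that the term $|y_2 - f(x_2)| \le \eta$ is legitimately available because $(x_2, y_2)$ lies in $\mathbb{X}$; the choice of $\beta$ is dictated precisely by forcing the horizontal coefficient below $1$.
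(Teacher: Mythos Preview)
Your proposal is correct and matches the paper's proof almost exactly: the same cancellation via the fixed-point identity $f(l_n(x)) = h(l_n(x)) + S(l_n(x))(f(x)-b(x))$, the same add-and-subtract splitting of the vertical term, and the same contraction factor $\max\{s,\ \lambda_l + \beta\lambda_l\lambda_S\eta\}$. The only minor difference is in the last step: to pass from basin $\mathbb{X}$ to basin $I\times\mathbb{R}$, the paper simply observes that $\eta$ was arbitrary (so any compact $B$ already lies in $\mathbb{X}$ for $\eta$ large enough), whereas you fix $\eta$ and argue that finitely many iterations pull $B$ into the strip via the vertical contraction by $s$---both arguments are valid, but the paper's is slightly more direct.
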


\begin{proof}
Let $\left(  x_{1},y_{1}\right)  ,\left(  x_{2},y_{2}\right)  \in\mathbb{X}$.
We have
\begin{align*}
&  d_{f}(w_{n}\left(  x_{1},y_{1}\right)  ,w_{n}\left(  x_{2},y_{2}\right)
)-\alpha\left\vert l_{n}(x_{1})-l_{n}(x_{2})\right\vert \\
&  =\beta|h(l_{n}(x_{1}))+S(l_{n}(x_{1}))(y_{1}-b(x_{1}))-f(l_{n}(x_{1}))\\
&  - (h(l_{n}(x_{2}))+S(l_{n}(x_{2}))(y_{2}-b(x_{2}))-f(l_{n}(x_{2})))|\\
&  =\beta|\left(  S(l_{n}(x_{1}))(y_{1}-f(x_{1}))\right)  -\left(
S(l_{n}(x_{2}))(y_{2}-f(x_{2}))\right)  |\\
&  \leq\beta\left\vert S(l_{n}(x_{1}))\right\vert \cdot|(y_{1}-f(x_{1}%
))-(y_{2}-f(x_{2}))|\\
&  +|S(l_{n}(x_{1}))-S(l_{n}(x_{2}))|\cdot\left\vert (y_{2}-f(x_{2}%
))\right\vert \\
&  \leq\beta s|(y_{1}-f(x_{1}))-(y_{2}-f(x_{2}))|+\beta\lambda_{l}\lambda
_{S}\eta\left\vert x_{1}-x_{2}\right\vert .
\end{align*}
To obtain the second equality, we used the fact that $f$ is the fixed point of \eqref{Top}. 

Hence%
\begin{align*}
&  d_{f}(w_{n}\left(  x_{1},y_{1}\right)  ,w_{n}\left(  x_{2},y_{2}\right)
)\\
&  \leq\left(  \alpha\lambda_{l}+\beta\lambda_{S}\lambda_{l}\eta\right)
\left\vert x_{1}-x_{2}\right\vert +\beta s|(y_{1}-f(x_{1}))-(y_{2}%
-f(x_{2}))|\\
&  \leq\left(  \alpha+\beta\lambda_{S}\eta\right)  \lambda_{l}\left\vert
x_{1}-x_{2}\right\vert +\beta s|(y_{1}-f(x_{1}))-(y_{2}-f(x_{2}))|\\
&  \leq c\cdot\left(  \alpha\left\vert x_{1}-x_{2}\right\vert +\beta\left\vert
\left(  y_{1}-f(x_{1})\right)  -\left(  y_{2}-f(x_{2})\right)  \right\vert
\right)
\end{align*}
where $c:=\max\left\{  s,\lambda_{l}+\beta\lambda_{l}\lambda_{S}\eta%
/\alpha\right\}  .$ Since $\lambda_{l}<1$ we can choose $\alpha,\beta>0$ so
that $c<1.$ For example, we can choose $\alpha=1$ and $0<\beta<\,\left(
1-\lambda_{l}\right)  /\lambda_{l}\lambda_{S}\eta$.

It follows that the IFS $\widetilde{\mathcal{W}}:=(\mathbb{X};w_{1}%
,w_{2},...,w_{N})$ is contractive, and hence it has a unique attractor. This
attractor must be $\Gamma(f)$ because a contractive IFS has a unique nonempty
compact invariant set and it is readily verified that $\widetilde{\mathcal{W}%
}(\Gamma(f))=\Gamma(f)$. Since we can choose the constant $\eta$ arbitrarily
large, it now follows that $\mathcal{W}$ has a unique attractor, namely
$\Gamma(f)$. Note that we have {\em not} provided a metric with respect to which
$\mathcal{W}$ is contractive!
\end{proof}

We remark that
\[
\Gamma(Tg)=\mathcal{W}\left(  \Gamma(g)\right)  , \text{ for all }g\in
{C^*}\text{.}%
\]
When, for example, $S$ is Lipschitz continuous with Lipschitz constant $s<1$, and the functions $l_{n}$ are
contractive, the graph of the fractal interpolant $f$ can be approximated by
the ``chaos game" algorithm. (See \cite{BL} and \cite{BWL} for new topological
viewpoints of the ``chaos game.")

\section{Bilinear fractal interpolation}

We consider a specific example of the preceding theory. Let $l_n:I\to [X_{n-1},X_n]$ be given by
\be\label{ln}
l_{n}(x) :=X_{n-1}+ \left(  \frac{X_{n}-X_{n-1}}{X_{N}-X_{0}}\right)
(x-X_{0})
\ee
and $S:I\to \R$ by
\[
S :=S_{n}\circ l_{n}^{-1},
\]
for $x\in\lbrack X_{n-1},X_{n}],$ $n = 1, \ldots, N$, where $S_n :I \to \R$,
\[
S_{n}(x):=s_{n-1}+\left(  \frac{s_{n}-s_{n-1}}{X_{N}-X_{0}}\right)  \left(
x-X_{0}\right)  \text{,}%
\]
with $\left\{  s_{j}:j=0,1,2,...,N\right\}  \subset\mathbb{(-}1,1)$. Note that the $s_j$, $j = 0,1,\ldots, N$, need not be ordered. 

Then $S$ is continuous and%
\begin{align*}
\left\vert S(x)\right\vert  &  \leq\max\{|S_{n}(l_{n}^{-1}(x))|:x\in\lbrack
X_{n-1},X_{n}],n\in\left\{  1,2,...,N\right\}  \}\\
&  =\max\left\{  \left\vert s_{j}\right\vert :j=0,1,...,N\right\}  = :s<1.
\end{align*}
Furthermore, let $b:I\to\R$ be given by
\be\label{b}
b(x):=Y_{0}+\left(  \frac{Y_{N}-Y_{0}}{X_{N}-X_{0}}\right)  (x-X_{0})
\ee
and $h:I\to \R$ by
\be\label{h}
h(x):= \sum_{n=1}^N \left[Y_{n-1}+\left(  \frac{Y_{n}-Y_{n-1}}{X_{n}-X_{n-1}}\right)  (x-X_{n-1})\right]\chi_{[X_{n-1},X_n]}(x),
\ee
where $\chi_M$ denotes the characteristic function of a set $M$.

Note that $b\in C^*$ and $h\in C^{**}$. Theorem \ref{operatorthm} implies that $T$ has a unique fixed point $f$.
Specifically, $f$ is the unique solution of the set of functional equations of the form
\be\label{functeq}
f(l_{n}(x))-h(l_{n}(x))=S_n(l_n(x))[f(x)-b(x)],\quad n = 1, \ldots, N;\; x\in I.
\ee
We refer to $f$ as a \textbf{bilinear fractal interpolant}. The reason for
this name is that in this case the functions $w_{n}$ of the IFS $\mathcal{W}$
take the form%
\[
w_{n}(x,y) :=(l_{n}(x),a+bx+cy+dxy),
\]
where $a,b,c,d$ are real constants. Functions of the form $B: (x,y)\mapsto
a+bx+cy+dxy$ are called \textit{bilinear} in the computer graphics literature. We will adhere to this terminology but like to point out that $B$ is for fixed $x$ or fixed $y$ affine in the other variable. More precisely,
\begin{gather*}
B((1-t)x_1 + t x_2, y) = (1-t)B(x_1,y) + t B(x_2,y)\\
B(x, (1-t)y_1 + ty_2) = (1-t)B(x,y_1) + t B(x,y_2),
\end{gather*}
for all $x_1, x_2, y_1, y_2, t\in \R$.

Using the expressions for $l_n$, $S_n$, and $h$ above, we can write the functions $w_n$ in the form
\begin{align}
w_{n} &  (x,y)= \left(X_{n-1}+ \left(  \frac{X_{n}-X_{n-1}}{X_{N}-X_{0}}\right)
(x-X_{0}),Y_{n-1}+\left(  \frac{Y_{n}-Y_{n-1}}{X_{N}-X_{0}%
}\right)  (x-X_{0})\right.\label{w}\\
& \left. + \left[  s_{n-1}+\left(  \frac{s_{n}-s_{n-1}}{X_{N}-X_{0}}\right)
(x-X_{0})\right]  \left[  y-Y_{0}-\left(  \frac{Y_{N}-Y_{0}}{X_{N}-X_{0}%
}\right)  (x-X_{0})\right]\right)  .\nonumber
\end{align}
In particular note that
\[
w_{n}(X_{N},y)=(X_{n},Y_{n}+s_{n}(y-Y_{N}))\;\;\text{and}\;\;w_{n+1}%
(X_{0},y)=(X_{n},Y_{n}+s_{n}(y-Y_{0})).
\]
It follows that the images of any (possibly degenerate) parallelogram with
vertices at $(X_{0},Y_{0}\pm H)$ and $(X_{N},Y_{N}\pm H)$, for $H\in
\mathbb{R}$ under the IFS fit together neatly, as illustrated in Figure
\ref{interp-b}.%
%TCIMACRO{\FRAME{ftbpFU}{5.0462in}{2.9663in}{0pt}{\Qcb{A fractal interpolation
%function defined by three bilinear transformations. See text.}}{\Qlb{interp-b}%
%}{interp-b.png}{\special{ language "Scientific Word";  type "GRAPHIC";
%display "USEDEF";  valid_file "F";  width 5.0462in;  height 2.9663in;
%depth 0pt;  original-width 3.7533in;  original-height 3.7498in;
%cropleft "0";  croptop "1";  cropright "1";  cropbottom "0";
%filename 'interp-b.png';file-properties "XNPEU";}}}%
%BeginExpansion
\begin{figure}[ptb]%
\centering
\includegraphics[
%natheight=3.749800in,
%natwidth=3.753300in,
%height = 2.9663in,
%width=5.0462in
height = 7.5cm,
width = 12.8cm
]%
{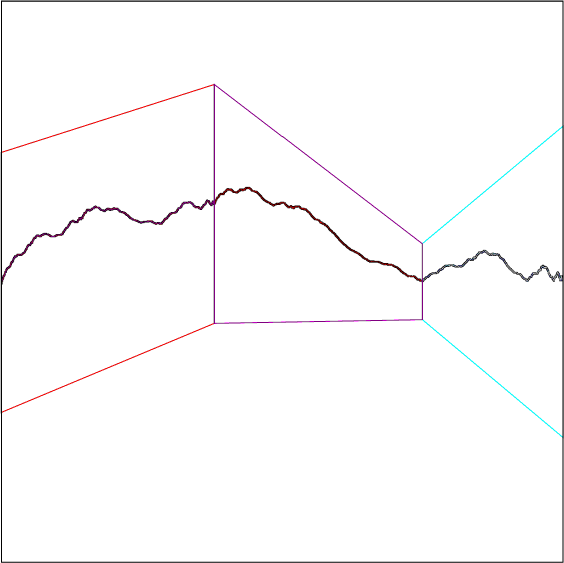}%
\caption{A fractal interpolation function defined by three bilinear
transformations. See text.}%
\label{interp-b}%
\end{figure}
%EndExpansion

\section{Box dimension of bilinear interpolants}

In this section, we derive a formula for the box dimension of the graphs of a class of bilinear interpolants. To this end, let $1 < N\in \N$, let $I := [0, 1]\subset \R$ be the unit interval, and let $\blacksquare := I\times I$ denote the filled unit square. Suppose that $\{0 =: X_0 < X_1 < \cdots < X_N := 1\}$ is a set of knots in $I$. Furthermore, suppose that $\{\uY_j \in I \st j = 0, 1, \ldots, N\}$ and $\{\oY_j \in I \st j = 0, 1, \ldots, N\}$ are two sets of points with the property that $0\leq \uY_j \leq \oY_j < 1$, $\forall j = 0, 1, \ldots, N$.

Denote by $Q_n$ the trapezoid with vertices $A_n := (X_{n-1}, \uY_{n-1})$, $B_n := (X_n, \uY_n)$,
$C_n := (X_n, \oY_n)$, and $D_n := (X_{n-1}, \oY_{n-1})$, $n = 1, \ldots, N$. For each $n = 1, \ldots, N$,
let $l_n : I \to  [X_{n-1},X_n]$ be a family of affine mappings and $B_n : I \times \R \to \R,
(x, y) \mapsto a_n x + b_n y + c_n x y + d_n = (d_n + a_n x) + (b_n + c_n x) y$, $a_n, b_n, c_n, d_n \in \R$, a
family of bilinear mappings.

Define mappings $w_n := (l_n, B_n) : \blacksquare \to Q_n$ by requiring that
\be\label{eq1}
(0,0)\stackrel{w_n}{\longmapsto} A_n,\quad (1,0) \stackrel{w_n}{\longmapsto} B_n,\quad (1,1) \stackrel{w_n}{\longmapsto} C_n,\quad (0,1) \stackrel{w_n}{\longmapsto} D_n.
\ee
It follows readily from (\ref{eq1}) that the affine mappings $l_n$ are given by \eqref{ln} and the bilinear mappings $B_n$ by
\be\label{Bn}
B_n (x,y) = a_n x + [s_{n-1} + (s_n - s_{n-1}) x] \,y + \uY_{i-1} ,
\ee
where we set $a_n = \uY_n - \uY_{n-1}$, $n = 1, \ldots, N$, and $s_j := \oY_j - \uY_j$, $j = 0,1,\ldots, N$. Note that $0 \leq s_j < 1$, for all $j = 0,1,\ldots, N$.

\begin{definition}
The IFS $\cF := (\blacksquare; w_1, \ldots, w_N)$ where $w_n := (l_n, B_n)$ with $l_n$ and $B_n$, $n=1, \ldots, N$, given by \eqref{ln} and \eqref{Bn}, respectively, is called \textbf{bilinear}.
\end{definition}

In \cite{BV} such bilinear IFSs are investigated in more generality and in connection with fractal homeomorphisms. The approach undertaken in \cite{BV} makes substantial use of the geometric properties that functions in an bilinear IFS possess, namely that they take horizontal and vertical lines to lines and that they preserve proportions along horizontal and vertical lines. For further details and results, we refer the interested reader to \cite{BV}.

Recall the definition of the metric $d_q$ given in Proposition \ref{prop1}. For our current purposes, we set $q \equiv 1$. As in Theorem \ref{thm4} we denote the Lipschitz constant of the $l_n$ by $\lambda_n$.

\begin{theorem}
The bilinear IFS $\cF = (\blacksquare; w_1, \ldots, w_N)$ is contractive in the metric $d_1$ with $\alpha := 1$ and $0 < \beta < \frac{1- \lambda_n}{2}$.
\end{theorem}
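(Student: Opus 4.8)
The plan is to prove contractivity of the IFS in the sense of Example~1, i.e.\ to show that each constituent map $w_n=(l_n,B_n)$ is individually a contraction on $(\blacksquare,d_1)$ with a Lipschitz constant bounded by a single $c<1$ independent of $n$. The first thing I would record is a simplification: since $q\equiv 1$ is constant, the terms $q(x_1)$ and $q(x_2)$ cancel in the definition of $d_q$ from Proposition~\ref{prop1}, so with $\alpha=1$ the metric reduces to the ordinary weighted taxi-cab metric
\[
d_1((x_1,y_1),(x_2,y_2))=|x_1-x_2|+\beta\,|y_1-y_2|.
\]
The task is thus to bound $d_1(w_n(x_1,y_1),w_n(x_2,y_2))$ by $c\,d_1((x_1,y_1),(x_2,y_2))$.

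I would treat the two coordinates separately. For the first coordinate, $l_n$ from \eqref{ln} is affine with slope $X_n-X_{n-1}\le\lambda_l$, giving $|l_n(x_1)-l_n(x_2)|\le\lambda_l\,|x_1-x_2|$. For the second coordinate I would expand $B_n(x_1,y_1)-B_n(x_2,y_2)$ using the form \eqref{Bn}, $B_n(x,y)=a_nx+\sigma_n(x)\,y+\uY_{n-1}$, where I set $\sigma_n(x):=s_{n-1}+(s_n-s_{n-1})x=(1-x)s_{n-1}+x\,s_n$. The key structural observation is that $\sigma_n(x)$ is a convex combination of $s_{n-1},s_n\in[0,1)$ for $x\in I$, hence $0\le\sigma_n(x)\le s<1$, where $s:=\max_j s_j$. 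Writing
\[
\sigma_n(x_1)y_1-\sigma_n(x_2)y_2=\sigma_n(x_1)(y_1-y_2)+(\sigma_n(x_1)-\sigma_n(x_2))y_2,
\]
and using $\sigma_n(x_1)-\sigma_n(x_2)=(s_n-s_{n-1})(x_1-x_2)$, the second-coordinate difference splits into a genuinely contractive part $\sigma_n(x_1)(y_1-y_2)$ and cross terms proportional to $x_1-x_2$.

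To control the cross terms I would exploit the domain constraints. Since every point lies in $\blacksquare=I\times I$ we have $|y_2|\le1$, and since the ordinates satisfy $0\le\uY_j\le\oY_j<1$ we get $|a_n|=|\uY_n-\uY_{n-1}|\le1$ and $|s_n-s_{n-1}|\le1$. This yields
\[
|B_n(x_1,y_1)-B_n(x_2,y_2)|\le\bigl(|a_n|+|s_n-s_{n-1}|\,|y_2|\bigr)|x_1-x_2|+\sigma_n(x_1)|y_1-y_2|\le 2\,|x_1-x_2|+s\,|y_1-y_2|.
\]
Combining the two coordinate estimates gives
\[
d_1(w_n(x_1,y_1),w_n(x_2,y_2))\le(\lambda_l+2\beta)|x_1-x_2|+\beta s\,|y_1-y_2|\le c\,d_1((x_1,y_1),(x_2,y_2)),
\]
with $c:=\max\{s,\lambda_l+2\beta\}$. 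The hypothesis $\beta<(1-\lambda_l)/2$ forces $\lambda_l+2\beta<1$, and $s<1$ is built into the data, so $c<1$ uniformly in $n$.

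The hard part is the cross term $(\sigma_n(x_1)-\sigma_n(x_2))y_2$, exactly as in Theorem~\ref{thm4}: its coefficient is not automatically small, and it is precisely here that restricting to the compact square $\blacksquare$ (so that $|y_2|\le1$) is essential. I expect the constant $2$ in the admissible range for $\beta$ to be the only genuinely new bookkeeping relative to Theorem~\ref{thm4}: it is the sum of the two unit bounds on the cross-term coefficients, namely the $a_n$ term coming from the explicit affine part of $B_n$ and the $s_n-s_{n-1}$ term coming from the slope variation of $\sigma_n$. Everything else should be routine once $\sigma_n$ is recognized as a convex combination landing in $[0,1)$.
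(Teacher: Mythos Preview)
Your proof is correct and follows essentially the same route as the paper's: both reduce $d_1$ to the weighted taxi-cab metric, split the bilinear difference via the add-and-subtract trick $\sigma_n(x_1)y_1-\sigma_n(x_2)y_2=\sigma_n(\cdot)(y_1-y_2)+\Delta s_n(x_1-x_2)y_{\bullet}$, bound the cross terms using $|y|\le 1$, $|a_n|\le 1$, $|\Delta s_n|\le 1$ on $\blacksquare$, and arrive at the contraction constant $c=\max\{s,\lambda_l+2\beta\}$. Your explicit use of $s=\max_j s_j$ in place of the paper's $s_n$ in the final line is in fact slightly cleaner.
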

\begin{proof}
It suffices to show that each $w_n\in \cF$ is contractive with respect to the metric $d_1$. To this end, let $(x,y), (x',y')\in\blacksquare$ and set $\Delta s_n:= s_n - s_{n-1}$, $n = 1, \ldots, N$. Then
\begin{align*}
d_1(&w_n(x,y),w_n(x',y')) = |l_n(x) - l_n(x')| + \beta |B_n(x,y) - B_n(x',y')|\\
& \leq \lambda_n |x - x'| + \beta (|a_n| |x - x'| + |[s_{n-1}+ \Delta s_n x] y - [s_{n-1}+ \Delta s_n x'] y'|)\\
& = \lambda_n |x - x'| + \beta(|a_n| |x - x'| + |[s_{n-1}+ \Delta s_n x] y\\
& \qquad - |[s_{n-1}+ \Delta s_n x'] y + |[s_{n-1}+ \Delta s_n x'] y -|[s_{n-1}+ \Delta s_n x'] y')\\
& \leq \lambda_n |x - x'| + \beta(|a_n| |x - x'| + |\Delta s_n y| |x - x'| + |s_{n-1} + \Delta s_n x'| |y - y‘|)\\
& \leq \left(\lambda_n + \beta (|a_n| + |\Delta s_n|\right) |x - x'| + \beta |s_n| |y - y'| \quad \text{(since $|x'|, |y| \leq 1$)}\\
& \leq \max\left\{\lambda_n + 2\beta, s_n\right\}\,d_1((x,y),(y',y'))\quad \text{(since $|a_n|, |\Delta s_n| \leq 1$)}.
\end{align*}
By choosing $0 < \beta < \frac{1-\lambda_n}{2}$ the maximum can be made strictly smaller than 1.
\end{proof}

Adapting \eqref{ln}, \eqref{b}, and \eqref{h} to the current setting using $\{\uY_j\st j = 0,1,\ldots, N\}$ instead of $\{Y_j\st j = 0,1,\ldots, N\}$, we see by Theorem \ref{operatorthm} that the associated operator $T :C^* \to C^{**}$ defined by
\be\label{T}
Tg := h + [s_{n-1} + \Delta s_n\,(\bullet)]\cdot (g - b)\circ L
\ee
is contractive and its unique fixed point $f$ is an element of $C^{**}$. Moreover, $f$ satisfies the functional equations set forth in \eqref{functeq}.

Next, we derive a formula for the box dimension of the graphs of bilinear fractal interpolants arising from the above bilinear IFS $\cF = (\blacksquare; w_1, \ldots, w_N)$. For this purpose, we may assume, without loss of generality, that $\uY_0 = \uY_N = 0$. This special case can always be achieved by means of an affine transformation (which does not change the box dimension).

To this end, we recall the definition of box-counting or box dimension of a bounded set $M\subset\mathbb{R}^{n}$:
\begin{equation}
\dim_{B} M :=\lim_{\varepsilon\rightarrow0+}\frac{\log\mathcal{N}%
_{\varepsilon}(M)}{\log\varepsilon^{-1}}, \label{dimeq}%
\end{equation}
where $\mathcal{N}_{\varepsilon}(M)$ is the minimum number of square boxes
with sides parallel to the axes, whose union contains $M.$ By the statement
``$\dim_{B}M=D$" we mean that the limit in equation (\ref{dimeq}) exists and
equals $D.$

In the case where $M$ is the graph $\Gamma(f)$ of a function $f$, knowledge of
the box dimension of $\Gamma(f)$ provides information about the smoothness of
$f$ since $\dim_{B}\Gamma(f)$ is related to H\"{o}lder exponents associated
with $f$. (See, for example, \cite[Section 12.5]{tricot}.)

The following result gives an explicit formula for the box dimension of the graph
of a bilinear fractal interpolant defined via the operator \eqref{T}. The proof is based on arguments first
applied in \cite{HM}.

\begin{theorem}
\label{dimthm} Let $\mathcal{F}$ denote the bilinear IFS defined above and let $\Gamma(f)$ denote its attractor. Suppose that the knots $\{X_j\st j = 0,1, \ldots, N\}$ are uniformally spaced on $I$, i.e., $X_j = j/N$, $\forall j = 0,1,\ldots, N$, and suppose that $s_0 = s_N$. If $\gamma := {\sum_{n=1}^{N}}\,\frac{s_{n-1}+s_{n}}{2} > 1$ and $\Gamma(f)$ is not a straight line segment then%
\[
\dim_{B}\Gamma(f)=1+\frac{\log{\gamma}}{\log N};
\]
otherwise $\dim_{B}\Gamma(f)=1.$
\end{theorem}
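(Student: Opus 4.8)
The plan is to reduce the computation of $\dim_B \Gamma(f)$ to the exponential growth rate of the oscillation sums of $f$ over the level-$k$ cylinder partition of $I$, and then to extract that rate from the functional equations \eqref{functeq}. Since the knots are uniformly spaced, $\lambda_l = 1/N$, so the cylinders $I_w := l_{w_1}\circ\cdots\circ l_{w_k}(I)$, indexed by words $w = w_1\cdots w_k$ of length $k$, all have length $N^{-k}$ and tile $I$. Setting $O_w := \mathrm{osc}(f,I_w) = \sup_{I_w} f - \inf_{I_w} f$ and $V_k := \sum_{|w|=k} O_w$, a routine column-counting estimate gives
\[
N^k V_k \;\le\; \mathcal{N}_{N^{-k}}(\Gamma(f)) \;\le\; N^k V_k + N^k .
\]
Hence $\dim_B \Gamma(f) = 1 + \lim_{k\to\infty} \frac{\log V_k}{k\log N}$ when $V_k\to\infty$, and $\dim_B \Gamma(f) = 1$ when $V_k$ stays bounded or grows subexponentially. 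The whole problem thus becomes the determination of the growth rate of $V_k$.

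First I would derive a one-step recursion for $V_k$. Applying $\mathrm{osc}$ over a parent cylinder $I_{w'}$, $|w'| = k$, to $f\circ l_n = h\circ l_n + S_n\cdot(f-b)$, and using that $h\circ l_n$, $b$, and $S_n$ are (piecewise) affine and hence Lipschitz, the child cylinder $I_{nw'} = l_n(I_{w'})$ obeys $O_{nw'} = S_n(c_{w'})\,O_{w'} + O(N^{-k})$, where $c_{w'}$ is the midpoint of $I_{w'}$ and the error constant is uniform in $k$, depending on the data only through the Lipschitz constants of $h$, $b$, $S_n$ and through $\|f\|_\infty$. Summing over $n$ and $w'$, the main term is $\sum_{w'} O_{w'}\sum_{n=1}^N S_n(c_{w'})$, and here the hypothesis $s_0 = s_N$ enters decisively: because $S_n(x) = s_{n-1} + (s_n - s_{n-1})x$,
\[
\sum_{n=1}^N S_n(x) = \sum_{j=0}^{N-1} s_j + (s_N - s_0)\,x ,
\]
which, under $s_0 = s_N$, equals the constant $\sum_{j=0}^{N-1} s_j = \gamma$, \emph{independently} of $x$. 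The position dependence of the bilinear vertical factor therefore averages out exactly, and the recursion collapses to $V_{k+1} = \gamma V_k + O(N)$ with the $O(N)$ bounded uniformly in $k$. This identity $\sum_n S_n\equiv\gamma$ is the structural heart of the theorem and is precisely what forces the averaged factors $\tfrac{s_{n-1}+s_n}{2}$ to appear.

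Solving the recursion yields both inequalities. For the upper bound, $\gamma > 1$ gives $V_k = O(\gamma^k)$, whence $\dim_B\Gamma(f)\le 1 + \frac{\log\gamma}{\log N}$; and $\gamma\le 1$ gives at most linear growth of $V_k$, whence $\dim_B\Gamma(f) = 1$. The degenerate alternative is also immediate: if $\Gamma(f)$ is a straight segment then, after the normalization $\uY_0 = \uY_N = 0$, the data are collinear, the unique fixed point is $f\equiv 0$, and $V_k\equiv 0$. For the lower bound when $\gamma > 1$ the recursion is self-improving: from $V_{k+1}\ge \gamma V_k - CN$ one checks that $V_{k+1} - \tfrac{CN}{\gamma-1}\ge \gamma\big(V_k - \tfrac{CN}{\gamma-1}\big)$, so once $V_{k_0} > \tfrac{CN}{\gamma-1}$ for a single index $k_0$, the oscillation sums satisfy $V_k\ge c\,\gamma^k$, giving $\dim_B\Gamma(f)\ge 1 + \frac{\log\gamma}{\log N}$ and completing the proof.

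The hard part is therefore seeding this lower bound: when $\gamma > 1$ and $\Gamma(f)$ is \emph{not} a straight segment, I must show that the threshold $\tfrac{CN}{\gamma-1}$ is actually crossed, i.e.\ that $V_k$ is unbounded rather than the oscillation sum of a non-affine but bounded-variation fixed point. The obstacle is that the additive injection $O(N)$ could a priori mask a small genuine oscillation. I would attack this by adapting the non-degeneracy argument of \cite{HM}: since a non-affine $f$ is non-constant, fix $x_0,y_0\in I$ with $f(x_0)\ne f(y_0)$ and follow the signed increments $\Delta_w := f(l_w(x_0)) - f(l_w(y_0))$, which satisfy the same averaged recursion governed by $\sum_n S_n\equiv\gamma$; writing $\overline S_n := \tfrac{s_{n-1}+s_n}{2}$, this says geometrically that a positive proportion of the bi-affine pieces $w_w(\blacksquare)$ carry a graph of vertical extent comparable to $\prod_j \overline S_{w_j}$, while $\sum_{|w|=k}\prod_j \overline S_{w_j} = \big(\sum_n \overline S_n\big)^k = \gamma^k$. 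The delicate point — the one genuinely requiring the transfer of the \cite{HM} technique from the affine to the bilinear setting — is to control the accumulated additive errors so that amplification by $\gamma > 1$ provably dominates the bounded injection rather than being cancelled by it, thereby forcing $V_k\gtrsim\gamma^k$.
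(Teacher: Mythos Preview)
Your approach is essentially the paper's: both reduce to column-wise box counts (your $V_k$ is the paper's $\mathcal{N}(r)/N^r$ up to an $O(1)$ term), both derive the recursion $V_{k+1}=\gamma V_k+O(1)$ from the identity $\sum_{n} S_n(x)\equiv\gamma$ that $s_0=s_N$ forces, and both close the lower bound with a seeding lemma. The only substantive difference is in that seeding step. Your signed-increment quantities $\Delta_w$ obey the \emph{same} averaged recursion with the \emph{same} additive error as $V_k$, so tracking them does not by itself cross the threshold $C/(\gamma-1)$; you are back to the original difficulty. The paper's Lemma~\ref{lemmadimension} sidesteps this by tracking, for each word $w=n_1\cdots n_k$, the height of the image of the single data point $(n_0/N,\uY_{n_0})$ above the chord joining the images of $(0,0)$ and $(1,0)$: because each $w_n(x,\cdot)$ is affine in $y$, this height scales \emph{exactly} by the factor $S_{n}(x)$ with no additive error, and then summing over all length-$k$ words using $\sum_n S_n\equiv\gamma$ at each stage (innermost sum first) collapses the product to $\gamma^k\delta$, giving $V_k\gtrsim\gamma^k\delta$ directly.
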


\begin{proof}
Note that in the computation of the box dimension of $\Gamma(f)$ it suffices
to consider covers of $\Gamma(f)$ whose elements are squares of side $N^{-r}$,
$r\in\mathbb{N}_{0} := \N\cup\{0\}$. Denote by $\mathcal{C}_{0} (r)$ a cover of $\Gamma(f)$
consisting of a finite number of squares of side $N^{-r}$, $r\in\mathbb{N}%
_{0}$. Now consider a specific cover $\mathcal{C} (r)$ of $\Gamma(f)$ of the
form
\begin{equation}
\label{cover}\mathcal{C} (r) := \left\{  \left[  \frac{k-1}{N^{r}},\frac
{k}{N^{r}}\right]  \times\left[  a,a+\frac{1}{N^{r}}\right]  : r\in
\mathbb{N}_{0};\,k = 1, \ldots, N^{r};\,a \in\mathbb{R}\right\}  .
\end{equation}
By the compactness of $\Gamma(f)$, there exists a minimal cover $\mathcal{C}%
^{*}_{0} (r)$ of $\Gamma(f)$ and also a minimal cover $\mathcal{C}^{*} (r)$ of
$\Gamma(f)$ of the form (\ref{cover}). Denote by $\mathcal{N}_{0} (r)$,
respectively, $\mathcal{N} (r)$ the cardinality of these minimal covers. Since
covers of the form (\ref{cover}) are more restrictive, we have $\mathcal{N}%
_{0} (r) \leq\mathcal{N} (r)$. On the other hand, every $(N^{-r}\times
N^{-r})$-square in $\mathcal{C}^{*}_{0} (r)$ can be covered by at most two
$(N^{-r}\times N^{-r})$-squares from a cover of the form (\ref{cover}). Thus,
$\mathcal{N} (r) \leq2 \mathcal{N}_{0} (r)$. Hence, when computing the box
dimension of $\Gamma(f)$ it suffices to consider covers of the form
(\ref{cover}).

To this end, let $r\in\mathbb{N}_{0}$ be fixed. Let $\mathcal{C} (r)$ be a
minimal cover of $\Gamma(f)$ of cardinality $\mathcal{N}(r)$ consisting of
squares of side $N^{-r}$ whose interiors are disjoint. Let $\mathcal{C} (r,k)$
be the collection of all squares in $\mathcal{C} (r)$ that lie between $x =
\frac{k-1}{N^{r}}$ and $x = \frac{k}{N^{r}}$, $k = 1, \ldots, N^{r}$. Denote
by $\mathcal{N}(r,k)$ the cardinality of $\mathcal{C}(r,k)$, and let
\[
\mathcal{R} (r,k) := \bigcup_{C\in\mathcal{C}(r,k)} C.
\]
As $\mathcal{C} (r)$ is a cover of $\Gamma(f)$ of minimal cardinality, every
square in $\mathcal{C}(r)$ must meet $\Gamma(f)$, and since $f$ is continuous
on $I$, the set $\mathcal{R}(r,k)$ must be a rectangle of width $N^{-r}$ and
height $N^{-r} \mathcal{N}(r,k)$. Note that $\mathcal{N}(r) =
\sum_{k=1}^{N^{r}}\;\mathcal{N}(r,k)$.

Now apply the mappings $w_{n}$, $n=1, \ldots, N$, defined in (\ref{eq1}) to the
rectangle $\mathcal{R}(r,k)$. The image of $\mathcal{R}(r,k)$ under $w_{n}$ is
a trapezoid contained in the strip $\displaystyle{\left[  \frac{l(k,n)-1}%
{N^{r+1}},\frac{l(k,n)}{N^{r+1}}\right]  \times\mathbb{R}}$, with $l (k,n) :=
k + (n-1) N^{r}$. Observe that
\[
\mathcal{N}(r+1) = \sum_{n=1}^{N} \sum_{k=1}^{N^{r}} \mathcal{N}(r+1,l(k,n)).
\]
The fixed point equation for $\Gamma(f)$, namely, $\Gamma(f) =
\displaystyle{\bigcup_{n=1}^{N}\,w_{i}} (\Gamma(f))$, implies that
\[
\Gamma(f) \subseteq\bigcup_{n=1}^{N} w_{n} \left(  \bigcup_{k=1}^{N^{r}}
\mathcal{R}(r,k)\right)  .
\]
Depending on the sign of $\Delta s_n$, there are ten possible geometric shapes for the trapezoid $w_{n} (\mathcal{R}(r,k))$. In Figure \ref{fig2} one of these trapezoids is depicted and the relevant geometric quantities identified. 
\begin{figure}[h!]%
\centering
\includegraphics[height = 4cm,width = 8cm]{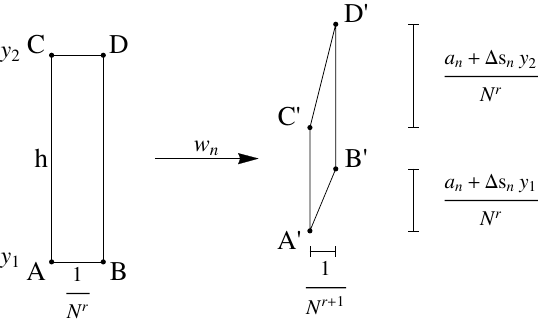}%
\caption{An image of a rectangle under the map $w_n$.}%
\label{fig2}%
\end{figure}

Employing the notation in Figure \ref{fig2}, we write $A' < B'$ if the $y$--coordinate of the point $A'$ is less than the $y$--coordinate of the point $B'$. Similarly, we define $A'\leq B'$.
\nl
\textbf{Case I: $\Delta s_n \geq 0$.}
Note that in this scenario, distance($A'$, $C'$) $\leq$  distance($B'$, $D'$). The five possible shapes are given by the location of the vertices $A'$, $B'$, $C'$, and $D'$. They are: $B' < D' \leq A' < C'$, $B' < A' \leq D' \leq C'$, $B' \leq A' < C'\leq D'$, $A'\leq B' \leq C' < D'$, and $A' < C' \leq B' < D'$. Each one of these trapezoids is contained in a rectangle of width $N^{-(r+1)}$ and height at most 
\be\label{upper_pos}
\left(s_{n-1} + \Delta s_n \cdot \frac{k}{N^r}\right) (N^{-r} \cN(r,k)) + \frac{2(|a_n| + |\Delta s_n|)}{N^r},
\ee
and meets a rectangle of width $N^{-(r+1)}$ and height at least 
\be\label{lower_pos}
\left(s_{n-1} + \Delta s_n \cdot \frac{k-1}{N^r}\right) (N^{-r} \cN(r,k)) - \frac{2(|a_n| + |\Delta s_n|)}{N^r}.
\ee
Hence, 
\begin{align}
\mathcal{N}(r+1,l(k,n)) &\leq \left[\left(s_{n-1} + \Delta s_n \cdot \frac{k}{N^r}\right) (N^{-r} \cN(r,k)) + \frac{2(|a_n| + |\Delta s_n|)}{N^r} \right] N^{r+1} + 2\nonumber\\
& = N \left(s_{n-1} + \Delta s_n \cdot \frac{k}{N^r}\right)\,\mathcal{N}(r,k) + 2N (|a_{n}| + |\Delta s_n|) + 2,
\end{align}
and, similarly, 
\begin{align}\label{7.9}
\mathcal{N}(r+1,l(k,n)) \geq N \left(s_{n-1} + \Delta s_n \cdot \frac{k-1}{N^r}\right)\,\mathcal{N}(r,k) - 2N (|a_{n}| + |\Delta s_n|) - 2.
\end{align}
\nl
\textbf{Case II: $\Delta s_n \leq 0$.}
Here, distance($A'$, $C'$) $\geq$  distance($B'$, $D'$) and the five possible shapes are as above. Each one of these trapezoids is contained in a rectangle of width $N^{-(r+1)}$ and height at most 
\be\label{upper_neg}
\left(s_{n-1} + \Delta s_n \cdot \frac{k-1}{N^r}\right) (N^{-r} \cN(r,k)) + \frac{2(|a_n| + |\Delta s_n|)}{N^r},
\ee
and meets a rectangle of width $N^{-(r+1)}$ and height at least 
\be\label{lower_neg}
\left(s_{n-1} + \Delta s_n \cdot \frac{k}{N^r}\right) (N^{-r} \cN(r,k)) - \frac{2(|a_n| + |\Delta s_n|)}{N^r}.
\ee
Thus, similar to Case I, we obtain an upper, respectively lower, bound for $\mathcal{N}(r+1,l(k,n))$ of the form
\begin{align}
\mathcal{N}(r+1,l(k,n)) \leq N \left(s_{n-1} + \Delta s_n \cdot \frac{k-1}{N^r}\right)\,\mathcal{N}(r,k) + 2N (|a_{n}| + |\Delta s_n|) + 2,
\end{align}
and
\begin{align}\label{7.13}
\mathcal{N}(r+1,l(k,n)) \geq N \left(s_{n-1} + \Delta s_n \cdot \frac{k}{N^r}\right)\,\mathcal{N}(r,k) - 2N (|a_{n}| + |\Delta s_n|) - 2.
\end{align}

Denote by $N_\pm$ the set of all indices $n\in\{1, \ldots, N\}$ for which $\Delta s_n \geq 0$, respectively, $\Delta s_n \leq 0$. Then, using Equations \eqref{upper_pos} and \eqref{upper_neg}, summation over $n$ yields
\begin{align*}
\sum_{n=1}^N \mathcal{N}(r+1,l(k,n)) &= \sum_{n\in N_+} \mathcal{N}(r+1,l(k,n)) + \sum_{n\in N_-} \mathcal{N}(r+1,l(k,n))\\
&\leq N \sum_{n\in N_+}\left(s_{n-1} + \Delta s_n \cdot \frac{k}{N^r}\right)\,\mathcal{N}(r,k)\\
& + N \sum_{n\in N_-} \left(s_{n-1} + \Delta s_n \cdot \frac{k-1}{N^r}\right)\,\mathcal{N}(r,k) \\
& +  \sum_{n=1}^N \left[2N (|a_{n}| + |\Delta s_n|) + 2\right]
\end{align*}
Now, 
\[
s_{n-1} + \Delta s_n \cdot \frac{k}{N^r} = \frac{s_{n-1}+s_n}{2} + \Delta s_n \left(\frac{k}{N^r} - \frac12\right), \quad n\in N_+,
\]
and
\begin{align*}
s_{n-1} + \Delta s_n \cdot \frac{k-1}{N^r} &= \frac{s_{n-1}+s_n}{2} + \Delta s_n \left(\frac{k-1}{N^r} - \frac12\right)\\ 
& = \frac{s_{n-1}+s_n}{2} + \Delta s_n \left(\frac{k}{N^r} - \frac12\right) + \frac{-\Delta s_n}{N^r},\quad n\in N_-.
\end{align*}
Substitution into the expression for $\sum_{n=1}^N \mathcal{N}(r+1,l(k,n))$ gives
\begin{align*}
\sum_{n=1}^N \mathcal{N}(r+1,l(k,n)) & \leq N \sum_{n=1}^N \left(\frac{s_{n-1}+s_n}{2}\right) \cN(r,k) + N \left(\sum_{n=1}^N \Delta s_n\right) \left(\frac{k}{N^r} - \frac12\right) \cN(r,k)\\
& + \sum_{n\in N_-} \frac{-\Delta s_n}{N^r} +  \sum_{n=1}^N \left[2N (|a_{n}| + |\Delta s_n|) + 2\right].
\end{align*}
As $\sum_{n=1}^N \Delta s_n = s_N - s_0 = 0$ by assumption, we obtain
\begin{align}\label{7.14}
\sum_{n=1}^N \mathcal{N}(r+1,l(k,n)) & \leq (N \gamma)\, \cN(r,k) + \sum_{n\in N_-} \frac{-\Delta s_n}{N^r} +  \sum_{n=1}^N \left[2N (|a_{n}| + |\Delta s_n|) + 2\right]\nonumber\\
& \leq (N \gamma)\, \cN(r,k) + c_1,
\end{align}
where we set $c_1 := \sum_{n=1}^N \left[2N (|a_{n}| + |\Delta s_n|) + \frac{|\Delta s_n|}{N}+ 2\right]$.

Summing \autoref{7.14} over $k$ produces an upper bound for $\cN(r+1)$ in terms of $\cN(r)$:
\[
\cN(r+1) \leq (N\gamma)\,\cN(r) + c_1 N^r.
\]
Induction on $r$ yields
\[
\cN(r) \leq (N\gamma)^r \,\cN(0) + c_1 N^{r-1} \sum_{\varrho = 0}^{r-1} \gamma^\varrho.
\]

Depending on the value of $\gamma$, two cases need to be considered.
\nl
\textbf{Case A: $\gamma\leq1$.} This implies that
$\mathcal{N}(r)\leq N^{r} (\mathcal{N}(0) + c_{1}r)$. Hence,
\[
\dim_{B} \Gamma(f) \leq\lim_{r\to\infty} \frac{\log N^r (\cN(0) + c_1 r)}{\log N^r} = 1.
\]
\nl\textbf{Case B: $\gamma> 1$.} Observing that in this situation
\[
\sum_{\varrho = 0}^{r-1} \gamma^\varrho \leq \frac{\gamma^r}{\gamma - 1},
\]
we obtain 
\[
\mathcal{N}(r)  \leq(\gamma N)^{r}\,\mathcal{N}(0) + \frac{c_1(N\gamma)^r}{\gamma - 1} =: c_2(N\gamma)^r.
\]
Thus,
\[
\dim_{B} \Gamma(f) \leq\lim_{r\to\infty} \frac{\log c_{2}\,(\gamma N)^{r}%
}{\log N^{r}} = 1 + \frac{\log\gamma}{\log N}.
\]

Note that since $f$ is a continuous function, $\dim_{B} \Gamma(f) \geq1$. If $\Gamma(f)$ is a
line segment, i.e., if the set of data $\mathcal{J}:= \{(j/N, \uY_{j}) \,:\, j
= 0,1,\ldots, N)\}$ is collinear, then $\Gamma(f) = [0,1]$ implying that $\dim_{B}
\Gamma(f) = 1$.

To obtain a nontrivial lower bound for $\Gamma(f)$, the following lemma is required.

\begin{lemma}\label{lemmadimension} 
If $\gamma:= \sum_{i=1}^{N}\, \frac{s_{i-1}+s_{i}}{2} > 1$, $s_0 = s_N$, and $\Gamma(f)$ is not a line segment then
\[
\lim_{r\to\infty}\frac{\cN(r)}{N^r} = \infty.
\]

\end{lemma}

\begin{proof}
The assumption that $\Gamma(f)$ is not a line segment implies the existence
of at least one index $n_{0}\in\{1, \ldots, N-1\}$ so that
\[
\delta:= \uY_{n_0} > 0.
\]
Since $f$ is continuous on $I$, we have that $\mathcal{N} (r) \geq\delta
N^{r}$. Note that $I$ is mapped to the line segments $\overline{(X_{n-1},\uY_{n-1}),(X_{n},\uY_{n})}$,
implying that for $r\geq 1$
\begin{align*}
\mathcal{N}(r) & \geq\sum_{n=1}^{N} \left[  s_{n-1} + \Delta s_{n}\,\frac{n_0}{N}\right]  \, \delta\, N^{r}\\
& = \left[\sum_{n=1}^{N} \left(\frac{s_{n-1}+s_n}{2}\right) + \sum_{n=1}^{N} \left(\frac{n_0}{N} - \frac12\right)\Delta s_n\right]\,\delta N^r\\
& = \sum_{n=1}^{N} \left(\frac{s_{n-1}+s_n}{2}\right) (\delta N^r).\qquad \text{(As the sum over $\Delta s_n$ equals zero.)}
\end{align*}
Proceeding inductively, we arrive at
\begin{align*}
\mathcal{N}(r) & \geq \sum_{n_{1},\cdots, n_{k}=1}^{N} \prod_{\ell=1}^{k} \left[\frac{s_{n_\ell - 1} + s_{n_\ell}}{2} \right] (\delta\, N^{r}),\qquad r\geq k.
\end{align*}
Therefore,
\[
\mathcal{N}(r) \geq [\gamma^{r}\, \delta -1] N^{r},
\]
which, since $\gamma> 1$, finishes the proof of the lemma.
\end{proof}

Suppose then that $\gamma> 1$ and that $\Gamma(f)$ is not a line segment,
i.e., $\mathcal{J}$ is not collinear. Since each $C\in\mathcal{C}(r,k)$
meets $\Gamma(f)$, the image of $C$ under the maps $w_{n}$, $n=1, \ldots,
N$, must also meet $\Gamma(f)$. 

Thus, using Equations \eqref{7.9} and \eqref{7.13}, we obtain
\begin{align*}
\sum_{n=1}^N \mathcal{N}(r+1,l(k,n)) &= \sum_{n\in N_+} \mathcal{N}(r+1,l(k,n)) + \sum_{n\in N_-} \mathcal{N}(r+1,l(k,n))\\
&\geq N \sum_{n\in N_+}\left(s_{n-1} + \Delta s_n \cdot \frac{k-1}{N^r}\right)\,\mathcal{N}(r,k)\\
& + N \sum_{n\in N_-} \left(s_{n-1} + \Delta s_n \cdot \frac{k}{N^r}\right)\,\mathcal{N}(r,k) \\
& -  \sum_{n=1}^N \left[2N (|a_{n}| + |\Delta s_n|) + 2\right]
\end{align*}
Algebra similar to that applied in the estimate for the upper bound, yields
\begin{align*}
\sum_{n=1}^N \mathcal{N}(r+1,l(k,n)) & \geq N \sum_{n=1}^N \left(\frac{s_{n-1}+s_n}{2}\right) \cN(r,k) \\
& - \sum_{n\in N_+} \frac{\Delta s_n}{N^r} -  \sum_{n=1}^N \left[2N (|a_{n}| + |\Delta s_n|) + 2\right].
\end{align*}
Summation over $k$ gives 
\[
\cN(r+1) \geq (N\gamma)\,\cN(r) - c_1 N^r.
\]
Hence,
\begin{align*}
\cN(r) & \geq (N\gamma)^{r-m} \cN(m) - c_1 N^{r-1} \sum_{\varrho = 0}^{r-m-1} \gamma^\varrho\\
& \geq (N\gamma)^{r-m}\left[\cN(m) - \frac{c_1 N^{m-1}}{1-\gamma^{-1}}\right],
\end{align*}
for all $m\in\mathbb{N}$ with $1\leq m \leq r$.

Lemma \autoref{lemmadimension} implies that we can choose $r$ and $m$ large enough so
that
\[
\mathcal{N}(m) - \frac{c_{1} N^{m-1}}{1-\gamma^{-1}} > 0.
\]
Therefore, $\mathcal{N}(r) \geq c_{2}\,(\gamma\,N)^{r}$, for a constant
$c_{2} > 0$ and for large enough $r$. Hence, $\dim_{B} \Gamma(f) \geq1 + \displaystyle{\frac{\log\gamma}{\log N}}$.
\end{proof}

\begin{remark}
Recall that the code space associated with an IFS is given by $\Omega := \{1, \ldots, N\}^\infty$. The elements of $\Omega$ are called codes. The set of all finite codes is defined as $\Omega^\prime := \bigcup_{k=0}^\infty \{1, \ldots, N\}^k$, where the empty set represents a code of length zero.

The proof of Theorem \ref{dimthm} shows in particular that for a given $\sigma= \sigma_{1}\ldots\sigma_{|\sigma|}\in \Omega^\prime$ of finite length $|\sigma|$, there exist constants $0 < \underline{c} \leq\overline{c}$ such that
\[
\underline{c}\, (\gamma\, N)^{|\sigma|}\leq\mathcal{N} (|\sigma|)
\leq\overline{c}\, (\gamma\, N)^{|\sigma|}.
\]
Moreover, if $w_{\sigma_{1}\cdots\sigma_{r}} (\Gamma(f))$ denotes the image of
$\Gamma(f)$ under the maps $w_{\sigma_{1}\cdots\sigma_{r}} := w_{\sigma_{1}%
}\circ\cdots\circ w_{\sigma_{r}}$ over the subinterval $l_{\sigma_{1}%
\cdots\sigma_{r}} (I)$, then there also exist constants $0 < \underline{c}^{*}
\leq\overline{c}^{*}$ such that
\begin{equation}
\label{estimate}\underline{c}^{*}\, \gamma_{\sigma_{1}}\cdots\gamma
_{\sigma_{r}}\, N^{|\sigma|}\leq\mathcal{N}_{\sigma_{1}\cdots\sigma_{r}}
(|\sigma|) \leq\overline{c}^{*}\, \gamma_{\sigma_{1}}\cdots\gamma
_{\sigma_{r}}\, N^{|\sigma|},
\end{equation}
where $\mathcal{N}_{\sigma_{1}\cdots\sigma_{r}} (|\sigma|)$ denotes the
minimum number of $N^{-|\sigma|}\times N^{-|\sigma|}$-squares from a cover of
the form (\ref{cover}) needed to cover $w_{\sigma_{1}\cdots\sigma_{r}}
(\Gamma(f))$ and $\gamma_{n} := \frac{s_{n-1}+s_{n}}{2}$, $n = 1, \ldots, N$.

Estimates of this type are important for box dimension calculations in the context of $V$-variable fractals and superfractals. We refer the interested reader to \cite{scealy} where such computations were made for affine fractal interpolants.
\end{remark}

\begin{remark}
Bilinear interpolants may be used to model or describe planar data sets that exhibit highly irregular behavior for which classical interpolation and approximation schemes such as polynomials and splines do not succeed. As in the case of affine interpolants, the determination of the free parameters, namely the scaling factors $s_0$, $s_1, \ldots s_N$, is essential for an accurate approximation of data sets using the error estimate \eqref{error}, or for modeling data with a pre-described or numerically computed box dimension. However, the particular nature of the problem dictates what type of optimization needs to be employed. For instance, an $L^2$-optimization may be applied to a functional setting as in \cite{igudesman}, or bounding volumes may be used for parameter identification as in \cite{manousopoulos}. These and related questions will be investigated elsewhere.
\end{remark}

\begin{center}
\textbf{Acknowledgements}
\end{center}

We would like to thank K. Le\'sniak for pointing out Theorem 3.82 in \cite{AB}. The second author wishes to thank The Australian National University for the kind hospitality during two visits to the Mathematical Sciences Institute in
February/March 2008 and July/August 2012.

\end{document}